\newtheorem{thm}{Theorem}[section]
\newtheorem{lem}[thm]{Lemma}
\newtheorem{prp}[thm]{Proposition}
\newtheorem{rem}[thm]{Remark}
\theoremstyle{definition}
\newtheorem{defn}{Definition}[section]
\newcommand{\scr}[1]{\mathscr #1}
\definecolor{wco}{rgb}{0.5,0.2,0.3}
\numberwithin{equation}{section} \theoremstyle{remark}
\newcommand{\ua}{\uparrow}
\title{{\bf Comparison Theorem for Distribution Dependent Neutral SFDEs}\footnote{Supported by NSFC(No., 11561027, 11661039, 71371193), NNSFC (11801406), NSF of Jiangxi(No., 20161BAB211018), Scientific Research Fund of Jiangxi Provincial Education Department(No., GJJ150444).}}
\author{
{\bf     Xing Huang$^{a)}$ and  Chenggui Yuan$^{b)}$   }\\
\footnotesize{  a)Center for Applied Mathematics, Tianjin University, Tianjin 300072, China}\\
\footnotesize{  b)Department of Mathematics, Swansea University, Swansea, SA1 8EN, UK.}\\
\footnotesize{  xinghuang@tju.edu.cn, C.Yuan@swansea.ac.uk.}}
\begin{document}
\def\R{\mathbb R}  \def\ff{\frac} \def\ss{\sqrt} \def\B{\mathbf
B}
\def\N{\mathbb N} \def\kk{\kappa} \def\m{{\bf m}}
\def\dd{\delta} \def\DD{\Delta} \def\vv{\varepsilon} \def\rr{\rho}
\def\<{\langle} \def\>{\rangle} \def\GG{\Gamma} \def\gg{\gamma}
  \def\nn{\nabla} \def\pp{\partial} \def\EE{\scr E}
\def\d{\text{\mbox{d}}} \def\bb{\beta} \def\aa{\alpha} \def\D{\scr D}
  \def\si{\sigma} \def\ess{\text{\rm{ess}}}
\def\beg{\begin} \def\beq{\begin{equation}}  \def\F{\mathscr F}
\def\Ric{\text{\rm{Ric}}} \def\Hess{\text{\rm{Hess}}}
\def\e{\text{\rm{e}}} \def\ua{\underline a} \def\OO{\Omega}  \def\oo{\omega}
 \def\tt{\tilde} \def\Ric{\text{\rm{Ric}}}
\def\cut{\text{\rm{cut}}} \def\P{\mathbb P} \def\ifn{I_n(f^{\bigotimes n})}
\def\C{\scr C}      \def\aaa{\mathbf{r}}     \def\r{r}
\def\gap{\text{\rm{gap}}} \def\prr{\pi_{{\bf m},\varrho}}  \def\r{\mathbf r}
\def\Z{\mathbb Z} \def\vrr{\varrho} \def\l{\lambda}
\def\L{\scr L}\def\Tt{\tt} \def\TT{\tt}
\def\i{{\rm in}}\def\Sect{{\rm Sect}}\def\E{\mathbb E} \def\H{\mathbb H}
\def\M{\scr M}\def\Q{\mathbb Q} \def\texto{\text{o}} \def\LL{\Lambda}
\def\Rank{{\rm Rank}} \def\B{\scr B} \def\i{{\rm i}} \def\HR{\hat{\R}^d}
\def\to{\rightarrow}\def\l{\ell}
\def\8{\infty}\def\Y{\mathbb{Y}}
\def\W{\mathbb{W}}

\def\Lip{{\rm Lip}}

\maketitle

\begin{abstract}
In this  paper, the existence and uniqueness of strong solutions to distribution dependent neutral SFDEs are proved. We give the conditions such that  the order preservation of  these equations holds.  Moreover, we show these conditions are also necessary when the coefficients are continuous. Under sufficient conditions, the result extends the one in the distribution independent case, and the necessity of these conditions  is new even in distribution independent case. 
\end{abstract}
\noindent
{\bf AMS Subject Classification}:   65C30, 65L20 \\
\noindent
 {\bf Keywords}: Comparison Theorem, Distribution dependent neutral SFDEs, Existence and uniqueness, Order preservation, Wasserstein distance.


\section{Introduction}
It is well-known that the order preservation is always an important topic in every field of mathematics. In the theory of  stochastic processes,  the order preservation is called ``comparison theorem". There are  order preservations in the distribution sense and  in the pathwise  sense,  the pathwise one  implies the distribution one. There are a lot of literature to investigate the comparison theorem. For example:  Ikeda and Watanabe  \cite{IW}, O'Brien \cite{O}, Skorohod \cite{S} and  Yamada \cite{Y} for one dimensional stochastic differential equations (SDEs) in the pathwise  sense, respectively; Chen and Wang \cite{CW}  for multidimensional diffusion processes  in the distribution  sense; Gal'cuk and Davis \cite{GD}, and Mao \cite{M} for one dimensional SDEs driven by semimartingale in the pathwise  sense, to name a few, see also \cite{W,W1}. Moreover, the comparison theorem has been extended to stochastic functional (delay) differential equations (SFDEs),   SDEs driven by jumps processes and backward SDEs, we refer reader to see  \cite{BY, HW, PY, PZ, YMY, Z}, and the references therein.

Recently, in their paper \cite{bai1}, Bai and Jiang made the contribution on the comparison theorem for  {\it neutral} SFDEs, they give  sufficient conditions such that the the comparison theorem holds for this class of stochastic equation. In present paper, we shall study
 the comparison theorem for  {\it distribution dependent neutral} SFDEs. Our results cover the ones in  \cite{bai1}. Furthermore,  we find the conditions are also necessary.

\section{Preliminaries}
Throughout the paper, we let $(\mathbb{R}^n, \<\cdot,\cdot\>, |\cdot|)$ be an $n$-dimensional Euclidean
space. Denote $\mathbb{R}^{n\times m}$ by the set of all $n\times m$ matrices endowed with Hilbert-Schmidt norm
 $\|A\|_{HS}:=\sqrt{\mbox{trace}(A^\ast A)}$ for every $A\in \mathbb{R}^{n\times m}$, in which $A^\ast$ denotes the transpose of $A$. For fixed $r_0>0$, let $\C=C([-r_0,0];\R^n)$ denote the
family of all continuous functions $h:[-r_0,0]\rightarrow\R^n$, endowed with the uniform norm
$\|h\|_\8:=\sup_{-r_0\le\theta\le0}|h(\theta)|$. Let $\mathscr{P}(\C )$ denote all probability measures on $\C$. For any continuous map $f: [-r_0,\infty)\to \R^n$  and
$t\ge 0$,  let  $f_t\in\C$ be such that $f_t(\theta)=f(\theta+t)$ for $\theta\in
[-r_0,0]$. We call $(f_t)_{t\ge 0}$ the segment of $(f(t))_{t\ge -r_0}.$ For $p\ge 2,$ let $\mathscr{P}_p(\C) $ denote all probability measures on $\C$ with finite $p-$moment, i.e. $\mu(\|\cdot\|^p_\8)=\int_\C\|\xi\|_\infty^p\mu(\d \xi)<\8.$ It is well-known that $\mathscr{P}_p(\C) $ is a polish space under the $L^p-$Wasserstein distance
$$
\mathbb{W}_p(\mu_1, \mu_2)=\inf_{\pi\in \mathbf{C}(\mu_1, \mu_2)}\bigg(\int_{\C\times \C}\|\xi-\eta\|_\8^p\pi(\d \xi, \d \eta)\bigg)^{1/p},
$$
where $\mathbf{C}(\mu_1, \mu_2)$ denotes the class of coupling of $\mu_1$ and $\mu_2.$
 Let  $(\Omega, \mathscr{F}, \{\mathscr{F}_t\}_{t\ge 0}, \mathbb{P})$ be a complete filtration
probability space,  and $\{W(t)\}_{t\ge0}$ be an $m$-dimensional standard Brownian motion defined on this probability space. For any real numbers $a, b$, we denote $a\vee b=\max\{a, b\}, a\wedge b=\min\{a, b\}, a^+=a \vee 0$ and $a^-=-(a\wedge 0).$ $a^+ (a^-)$ is called the positive (negative) part of $a.$ For a random variable $Y$ on some probability space $(E,\scr E, \P)$, we denote $\L_{Y}|\P$ the distribution of $Y$ under $\P$.
In this paper, we consider the following distribution dependent neutral   stochastic functional differential equations (NSFDEs) on $\mathbb{R}^n$:
\begin{equation}\label{neq}
\mbox{d}\big[X(t)-D(X_t)\big]=b(t, X_t, \L_{X_t})\mbox{d}t+\sigma(t, X_t, \L_{X_t})\mbox{d}W(t)
\end{equation}
and
\begin{equation}\label{neq1}
\mbox{d}\big[\bar X(t)-D(\bar X_t)\big]=\bar b(t, \bar X_t, \L_{\bar X_t})\d t+\bar\sigma(t, \bar{X}_t, \L_{\bar{X}_t})\mbox{d}W(t),
\end{equation}
where $D: \C\rightarrow\mathbb{R}^n$, which is called neutral term,  $b,\bar b: [0, \8)\times\C\times \mathscr{P}(\C)\rightarrow\mathbb{R}^n$, $\sigma,\bar{\sigma}: [0, \8)\times\C\times \mathscr{P}(\C)\rightarrow\mathbb{R}^{n\times m}$ are measurable, and $\L_{X_t}$ denotes the distribution of $X_t.$

\beg{defn}  $(1)$ For any $s\ge 0$, a continuous adapted process $(X_{s,t})_{t\ge s}$ on $\C$ is called a (strong) solution of \eqref{neq} from time $s$, if
  $$\E |D(X_{s,t})|^2+\E\|X_{s,t}\|_\infty^2 +\int_s^t \E\big\{|b(r,X_{s,r},\L_{X_{s,r}})|+\|\si(r,X_{s,r}, \L_{X_{s,r}})\|^2\big\}\d r<\infty,\ \ t\ge s,$$   and  $(X_s,(t):= X_{s,t}(0))_{t\ge s}$ satisfies $\P$-a.s.
\begin{align*}
X_s,(t) -D(X_{s,t})&= X_s,(s)-D(X_{s,s}) +\int_s^t b(r,X_{s,r}, \L_{X_{s,r}})\d r\\
 &\qquad\qquad\qquad+ \int_s^t \si(r,X_{s,r},\L_{X_{s,r}})\d W(r),\ \ t\ge s.
\end{align*}
We say that \eqref{neq} has (strong) existence and uniqueness, if for any $s\ge 0$ and $\F_s$-measurable random variable $X_{s,s}$ with $\E\|X_{s,s}\|_{\infty}^2<\infty$, the equation from
time $s$ has a unique solution  $(X_{s,t})_{t\ge s}$. When $s=0$ we    simply denote $X_{0,}=X$; i.e.\,$X_{0,}(t)=X(t), X_{0,t}=X_t, t\ge 0$.

$(2)$ A couple $(\tt X_{s,t}, \tt W(t))_{t\ge s}$ is called a weak solution to \eqref{neq} from time $s$, if $\tt W(t)$  is an $m$-dimensional standard Brownian motion on a complete filtration probability space
$ (\tt\OO, \{\tt\F_t\}_{t\ge s}, \tt\P)$, and   $\tt X_{s,t}$ solves
\begin{align*} \d (\tt X_{s,}(t)-D(\tt X_{s,t}))= b(t,\tt X_{s,t},  \L_{\tt X_{s,t}}|_{\tt\P})\d t + \si(t,\tt X_{s,t},  \L_{\tt X_{s,t}}|_{\tt\P})\d \tt W(t),\ \ t\ge s.\end{align*}

$(3)$  \eqref{neq} is said to satisfy weak uniqueness, if for any $s\ge 0$, the distribution of a weak solution $(X_{s,t})_{t\ge s}$ to \eqref{neq} from $s\ge 0$ is uniquely determined by $\scr L_{X_{s,s}}$.
\end{defn}
For future, we need the following assumptions.
\begin{enumerate}
\item[{\bf (A1)}] 
$D(0)=0$ and $D(\xi)\geq D(\eta)$ for $\xi\geq \eta$.
\item[{\bf (A2)}] There exists a constant $L>0$ such that
\begin{equation*}
\begin{split}
& |b(t, \xi, \mu)-b(t, \eta, \nu)|^2
+ |\bar b(t, \xi, \mu)-\bar b(t, \eta, \nu)|^2\\
&\le L(\|\xi-\eta\|_\infty^2+ \mathbb{W}_2(\mu, \nu)^2),\ \  t\ge 0, \xi, \eta\in \C; \mu, \nu \in \scr P_2(\C).
\end{split}
\end{equation*}
\item[{\bf (A3)}] For any $i=1, \ldots, n$,
\begin{equation*}
\begin{split}
&\sum_{j=1}^m\|\sigma_{ij}(t, \xi, \mu)-\sigma_{ij}(t, \eta, \nu)\|^2+ \|\bar \sigma_{ij}(t, \xi, \mu)-\bar \sigma_{ij}(t, \eta, \nu)\|^2\\
&\le L|\xi^i(0)-D^i(\xi)-\eta^i(0)+D^i(\eta)|^2, \ \ t\ge 0, \xi, \eta\in \C; \mu, \nu \in \scr{P}_2(\C),
\end{split}
\end{equation*}
here $L$ is in {\bf(A2)}.
\item[{\bf (A4)}]  There exists a increasing function $\beta(t) \ge 0$  such that
\begin{equation*}
\begin{split}
| b(t, 0, \delta_0)|^2+|\bar b(t, 0, \delta_0)|^2+|\sigma(t, 0, \delta_0)|^2+|\bar \sigma(t, 0, \delta_0)|^2\le \beta(t), t \ge 0,
\end{split}
\end{equation*}
where $\delta_0$ is the Dirac measure at point $0 \in \C.$
\item[{\bf (A5)}]  There exists a $\kappa\in (0, 1)$ such that
\begin{equation*}
|D(\xi)-D(\eta)|\le \kappa \max_{1\leq i\leq n}\|\xi^i-\eta^i\|_\8.
\end{equation*}
\end{enumerate}

\section{Existence and Uniqueness}
In this section, we investigate the existence and uniqueness of the solution to \eqref{neq}. To this end, we use  conditions which are weaker than the assumptions above.
\begin{enumerate}
\item[{\bf (A2')}] There exists an increasing function $\alpha:[0,\infty)\to(0,\infty)$ such that
\begin{equation*}
\begin{split}
& |b(t, \xi, \mu)-b(t, \eta, \nu)|^2
+ |\bar b(t, \xi, \mu)-\bar b(t, \eta, \nu)|^2\\
&\le \alpha(t)(\|\xi-\eta\|_\infty^2+ \mathbb{W}_2(\mu, \nu)^2),\ \  t\ge 0, \xi, \eta\in \C; \mu, \nu \in \scr P_2(\C).
\end{split}
\end{equation*}
\item[{\bf (A3')}] For $\alpha$ in {\bf(A2')},
\begin{equation*}
\begin{split}
&\|\sigma(t, \xi, \mu)-\sigma(t, \eta, \nu)\|^2+ \|\bar \sigma(t, \xi, \mu)-\bar \sigma(t, \eta, \nu)\|^2\\
&\le \alpha(t)(\|\xi-\eta\|_\infty^2+ \mathbb{W}_2(\mu, \nu)^2), \ \ t\ge 0, \xi, \eta\in \C; \mu, \nu \in \scr P_2(\C).
\end{split}
\end{equation*}
\item[{\bf (A5')}]  There exists a $\kappa\in (0, 1)$ such that
\begin{equation*}
|D(\xi)-D(\eta)|\le \kappa \|\xi-\eta\|_\8.
\end{equation*}
\end{enumerate}
\begin{thm}\label{TEU} Assume {\bf(A2')}, {\bf(A3')}, {\bf(A4)} and {\bf(A5')}, then the equation \eqref{neq} has s unique strong solution. Moreover, the weak uniqueness holds.
\end{thm}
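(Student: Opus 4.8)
The plan is to run the classical two-layer fixed-point scheme for distribution dependent equations, adapted to the neutral structure. Fix $s\ge 0$ and an $\F_s$-measurable $X_{s,s}$ with $\E\|X_{s,s}\|_\infty^2<\infty$; since $\alpha$ and $\beta$ are increasing it is enough to construct the solution on each finite interval $[s,s+T]$ and then patch, using uniqueness for consistency. The main tool is the following neutral contraction estimate, a direct consequence of \textbf{(A5')}: if continuous processes $X,X'$ coincide on $[s-r_0,s]$ and satisfy $X(t)=D(X_t)+g(t)$, $X'(t)=D(X'_t)+g'(t)$ for $t\in[s,s+T]$ with $g,g'$ continuous, then $|X(t)-X'(t)|\le\kappa\sup_{s\le u\le t}|X(u)-X'(u)|+\sup_{s\le u\le t}|g(u)-g'(u)|$, whence $\sup_{s\le u\le t}|X(u)-X'(u)|\le(1-\kappa)^{-1}\sup_{s\le u\le t}|g(u)-g'(u)|$; taking $X'\equiv X_{s,s}(0)$ yields the corresponding a priori bound. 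In particular, for fixed continuous $Z$ the map $X\mapsto\big(t\mapsto D(X_t)+Z(t)\big)$ is a $\kappa$-contraction on $C([s,s+T];\R^n)$ with the prescribed past, so every step of the Picard scheme below is well posed.

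I would first freeze the law. Given a $\mathbb{W}_2$-continuous flow $(\mu_t)_{t\in[s,s+T]}$ with $\mu_s=\L_{X_{s,s}}$, consider the classical neutral SFDE obtained from \eqref{neq} by replacing $\L_{X_t}$ with $\mu_t$, and solve it by Picard iteration. Using \textbf{(A2')}, \textbf{(A3')}, the Burkholder--Davis--Gundy inequality, Hölder's inequality and the neutral estimate (which contributes a factor $(1-\kappa)^{-2}$) gives, for $h_k(t):=\E\sup_{s\le r\le t}|X^{(k+1)}(r)-X^{(k)}(r)|^2$, an inequality $h_k(t)\le C\int_s^t h_{k-1}(u)\,\d u$ with $C=C(\kappa,T,\alpha(s+T))$, while the a priori version together with \textbf{(A4)} (note $\mathbb{W}_2(\mu_u,\delta_0)^2=\mu_u(\|\cdot\|_\infty^2)$ and $\sup_{u\le s+T}\mu_u(\|\cdot\|_\infty^2)<\infty$) and Gronwall's lemma bounds $\E\sup_{s\le r\le s+T}|X^{(k)}(r)|^2$ uniformly in $k$. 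Hence the iterates form a Cauchy sequence in the relevant $L^2$ norm and converge to a unique strong solution $X^\mu$, and $\Psi:\mu\mapsto\big(t\mapsto\L_{X^\mu_t}\big)$ is a well-defined map on the space of such flows (continuity of $t\mapsto\L_{X^\mu_t}$ following from the path continuity and moment bounds of $X^\mu$).

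Next I would iterate at the level of laws. Coupling $X^\mu$ and $X^\nu$ through the same Brownian motion and initial datum and applying \textbf{(A2')}, \textbf{(A3')}, BDG and the neutral estimate once more, one gets $\mathbb{W}_2(\Psi(\mu)_t,\Psi(\nu)_t)^2\le\E\sup_{s\le r\le t}|X^\mu(r)-X^\nu(r)|^2\le C'\int_s^t\mathbb{W}_2(\mu_u,\nu_u)^2\,\d u$. Equipping the flow space with the weighted metric $\rho_\lambda(\mu,\nu)^2:=\sup_{s\le t\le s+T}\e^{-\lambda(t-s)}\mathbb{W}_2(\mu_t,\nu_t)^2$ and choosing $\lambda=2C'$ turns $\Psi$ into a strict contraction; its unique fixed point $\mu^\ast$ gives, via $X_{s,\cdot}:=X^{\mu^\ast}$, the unique strong solution of \eqref{neq} on $[s,s+T]$, and letting $T\to\infty$ completes the strong existence and uniqueness. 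For weak uniqueness, any weak solution $(\tt X,\tt W)$ from $s$ has a $\mathbb{W}_2$-continuous marginal flow $\tt\nu_t:=\L_{\tt X_t}|_{\tt\P}$ and is a weak solution of the classical neutral SFDE with coefficients frozen at $\tt\nu$; that equation has pathwise uniqueness by the estimate just used and weak existence (indeed strong existence), hence weak uniqueness by the Yamada--Watanabe theorem, so the law of $\tt X$ is determined by $\tt\nu$ and $\L_{X_{s,s}}$. This forces $\tt\nu$ to be a fixed point of $\Psi$, hence $\tt\nu=\mu^\ast$, and therefore the law of $\tt X$ is uniquely determined by $\L_{X_{s,s}}$.

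The step I expect to be the main obstacle is the interaction of the neutral term with the two nested fixed points: one has to be careful to apply the $(1-\kappa)^{-1}$ factor to pathwise suprema \emph{before} taking expectations (the estimate fails if one tries to contract in $L^2$ directly), and one must verify that the uniform-in-$k$ second-moment bound of the first step, which in general only propagates cleanly on short time intervals, is preserved along the measure iteration and then extended to $[s,s+T]$ by patching. Once these points are in place, the remaining work is routine estimation with Burkholder--Davis--Gundy, Hölder and Gronwall.
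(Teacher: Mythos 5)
Your argument is correct, and it is the same underlying strategy as the paper's (freeze the law, solve the classical neutral SFDE, iterate/fix-point the law map), but the technical execution is genuinely different. The paper runs a Picard iteration in distribution at the process level: $X^{(n)}$ solves \eqref{EN} with the frozen flow $\mu^{(n-1)}=\L_{X^{(n-1)}_\cdot}$, the frozen equation being settled by citing \cite[Theorem 2.1]{bai1}; the key estimate is the short-time geometric decay \eqref{*3} of Lemma \ref{L2.1}, after which existence follows by patching in time, uniqueness by a separate Gronwall argument, and weak uniqueness by reference to \cite{W16}. You instead set up a genuine Banach fixed point for the law map $\Psi:\mu\mapsto(\L_{X^\mu_t})$ on the space of $\W_2$-continuous flows over the whole horizon $[s,s+T]$, using the weighted metric $\sup_t \e^{-\lambda(t-s)}\W_2(\mu_t,\nu_t)^2$, you prove the frozen step yourself by Picard iteration (your pathwise $(1-\kappa)^{-1}$ neutral estimate is the same device as the paper's inequality $(x+y)^2\le x^2/\kappa+y^2/(1-\kappa)$, applied before taking expectations, exactly as you flag), and you make the weak-uniqueness argument explicit via Yamada--Watanabe for the frozen equation rather than citing it. What each buys: your route gives existence and uniqueness (strong and weak) in one stroke on an arbitrary finite horizon, with no short-time restriction or patching, and is self-contained; the paper's process-level iterates $X^{(n)}$ and the convergence statement \eqref{A01} are not incidental, however --- they are reused verbatim in the proof of the comparison theorem (Theorem \ref{main}), where the order is first propagated along the iterates and then passed to the limit, so the paper's organization pays off later. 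One small point you should make explicit in your uniqueness and weak-uniqueness steps: the marginal flow of an arbitrary solution must be shown to belong to your flow space (i.e.\ to be $\W_2$-continuous with finite second moments); this follows from the definition's requirement $\E\|X_{s,t}\|_\infty^2<\infty$ by covering $[s-r_0,s+T]$ with finitely many windows of length $r_0$, and is needed before you may identify that flow with the unique fixed point of $\Psi$.
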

We will prove this result by using the argument of \cite{HRW} and \cite{W16}, and we only need to consider the first equation in \eqref{neq}. For fixed  $s\ge 0$ and $\F_s$-measurable  $\C$-valued random variable $X_{s,s}$ with $\E\|X_{s,s}\|_{\infty}^2<\infty$, we construct the first equation in \eqref{neq} by iterating in distribution as follows. Firstly, let
$$X^{(0)}_{s,t}=X_{s,s}, \ \ \mu_{s,t}^{(0)}=\L_{X^{(0)}_{s,t}},\ \ t\ge s.$$
For any $n\ge 1,$ let $(X_{s,t}^{(n)})_{t\ge s}$ solve the classical neutral SFDE
\beq\label{EN}
\d (X^{(n)}_{s,}(t)-D(X^{(n)}_{s,t}))= b(t,X^{(n)}_{s,t}, \mu_{s,t}^{(n-1)}) \d t + \si(t,X^{(n)}_{s,t},\mu_{s,t}^{(n-1)})\,\d W(t),\ \  t\ge s,
\end{equation}
with $X_{s,s}^{(n)}=X_{s,s}$, where $\mu_{s,t}^{(n-1)}:=\L_{X_{s,t}^{(n-1)}}$  and $X^{(n)}_{s,t}(\theta):= X^{(n)}_{s,}(t+\theta)$ for $\theta\in [-r_0, 0]$.

\beg{lem} \label{L2.1} Assume {\bf(A2')}, {\bf(A3')}, {\bf(A4)} and {\bf(A5')}. Then, for every $n\ge 1$, the neutral SFDE $\eqref{EN}$ has a unique strong solution $X^{(n)}_{s,t}$ with
\beq\label{*2} \E\sup_{t\in [s-r_0,T]} |X^{(n)}_{s,}(t)|^2<\infty,\ \ T>s, n\ge 1.\end{equation} Moreover, for any $T>0$, there exists $t_0>0$ such that for all $s\in [0,T]$ and $X_{s,s}\in L^2(\OO\to\C;\scr F_s)$,
\beq\label{*3}
\E \sup_{ t\in [s, s+t_0]} |X^{(n+1)}_{s,}(t)-X^{(n)}_{s,}(t)|^2\le 4 \e^{-n}  \E\sup_{t\in [s,s+t_0]} |X^{(1)}_{s,}(t)|^2,\ \  n\ge 1.
\end{equation}
\end{lem}

\beg{proof} The proof is similar to that of \cite[Lemma 2.1]{W16} and \cite[Lemma 3.2]{HRW}.
Without loss of generality, we may assume that  $s=0$ and simply denote  $X_{0,}(t)=X(t), X_{0,t}=X_t, t\ge 0$.

(1) We first prove that  the SDE \eqref{EN} has a unique strong solution and \eqref{*2} holds.

For   $n=1$, let
$$ \check{b}(t,\xi)= b(t,\xi, \mu_{t}^{(0)}),\ \ \check{\si}(t,\xi)= \si (t,\xi, \mu_{t}^{(0)}),\ \ t\ge 0, \xi\in\C.$$ Then \eqref{EN} reduces to
\beq\label{EN*} \d (X^{(1)}(t)-D(X_t^{(1)}))=  \check{b}(t, X_{t}^{(1)})\d t + \check{\si}(t, X_{t}^{(1)})\d W(t),\ \ X_{0}^{(1)}=X_{0}, t\ge 0.\end{equation}
By {\bf(A2')}, {\bf(A3')}, {\bf(A4)} and {\bf(A5')}, the coefficients $ \check{b}$ and $\check{\si}$ satisfy the standard monotonicity condition which implies strong existence, uniqueness and non-explosion for
neutral SFDE \eqref{EN*}, see e.g. \cite[Theorem 2.1]{bai1}.
By  {\bf(A2')}, {\bf(A3')}, {\bf(A4)} and {\bf(A5')},     there exists an increasing function $H:\R_+\to\R_+$ such that
\beg{align*}& |b(t,\xi,\mu_t^{(0)})|^2  + \|\si(t,\xi,\mu_t^{(0)})\|_{HS}^2\\
&\le 2|b(t,\xi,\mu_t^{(0)})-b(t, 0,\mu_t^{(0)})|^2
  +   2|b(t,0,\mu_t^{(0)})|^2\\
  &+   2\|\si(t,\xi,\mu_t^{(0)})-\si(t,0,\mu_t^{(0)})\|^2_{HS}+2\|\si(t,0,\mu_t^{(0)})\|^2_{HS}\\
&\le H(t) \big\{1+ \|\xi\|_{\infty}^2+\mu_t^{(0)}(\|\cdot\|_{\infty}^2)\big\},\ \ \ t\ge 0, \xi\in \C.\end{align*}
For any $N\in [1,\infty)$ and $\tau_N:= \inf\{t\ge 0: |X^{(1)}(t)|\ge N\}$,
\beg{align*} & |X^{(1)}(t\wedge\tau_N)-D(X_{t\wedge\tau_N}^{(1)})|^2\\
  &\leq 3|X^{(1)}(0)-D(X_0^{(1)})|^2+3\left|\int^{t\wedge\tau_N}_0\si(s,X_s^{(1)},\mu_s^{(0)})\d W(s)\right|^2\\
&\quad + 3\left|\int_0^{t\wedge\tau_N}b(s, X_s^{(1)},  \mu_s^{(0)})\d s\right|^2.\end{align*}
Applying inequality $(x+y)^2\leq \frac{x^2}{p}+\frac{y^2}{1-p}$ for $p\in(0,1)$ and $x, y\geq 0$, we have
\beg{align*}
&|X^{(1)}(t\wedge\tau_N)|^2\leq \frac{|X^{(1)}(t\wedge\tau_N)-D(X_{t\wedge\tau_N}^{(1)})+D(0)|^2}{1-\kappa}+\frac{|D(X_{t\wedge\tau_N}^{(1)})-D(0)|^2}{\kappa}\\
&\le \kappa\|X_{t\wedge\tau_N}^{(1)}\|^2_{\infty}+c|D(0)|^2+c\|X^{(1)}_0\|_{\infty}^2+c\left|\int^{t\wedge\tau_N}_0\si(s,X_s^{(1)},\mu_s^{(0)})\d W(s)\right|^2\\
&\quad + c\left|\int_0^{t\wedge\tau_N}b(s, X_s^{(1)},  \mu_s^{(0)})\d s\right|^2, \ \ t\leq\tau_N\ \ \end{align*}
for some constant $c>0$.
Noting $\kappa\in(0,1)$, combining this with {\bf(A4)} and applying the BDG inequality we have
\beg{align*} &\E \sup_{s\in [-r_0,t\land \tau_N]} |X^{(1)}(s)|^2
 \le c\mathbb{E}\|X^{(1)}_0\|_{\infty}^2+c|D(0)|^2\\
 &\qquad\qquad\qquad+H(t) \E\int_0^{t\land \tau_N} \big(1+\|X_s^{(1)}\|_{\infty}^2 + \mu_s^{(0)}(\|\cdot\|_{\infty}^2)\big)\d s, \ \ t\geq 0.\end{align*}
This implies
\begin{equation*}\begin{split}
&\E \sup_{s\in [-r_0,t\land \tau_N]} |X^{(1)}(s)|^2 \le c\mathbb{E}\|X^{(1)}_0\|_{\infty}^2+c|D(0)|^2\\
&\qquad\qquad\qquad+ H(t)\int_0^{t} \big\{1+ \E \sup_{r\in [-r_0,s\land \tau_N]}|X^{(1)}(r)|^2 + \mu_s^{(0)}(\|\cdot\|_{\infty}^2)\big\}\d s,\ \ t\ge 0. \end{split}\end{equation*}
By first applying Gronwall's Lemma then letting $N\to\infty$, we arrive at
$$\E \sup_{s\in [-r_0,t]} |X^{(1)}(s)|^2<\infty,\ \ t\ge 0.$$
Therefore,   \eqref{*2}  holds  for $n=1$.

Now, assuming that the assertion holds for $n=k$ for some $k\ge 1$, we are going to show   it for $n=k+1$. Since the proof is similar to  repeat  the  argument above with
 $(X_\cdot^{(k+1)}, \mu_\cdot^{(k)}, X_\cdot^{(k)})$ replacing $(X_\cdot^{(1)}, \mu_\cdot^{(0)},X_\cdot^{(0)})$,  we omit it here.

(2) To prove \eqref{*3}, let
\beg{align*} &\xi^{(n)}(t) = X^{(n+1)}(t)- X^{(n)}(t),\\
&\LL_t^{(n)}= \si(t,X_t^{(n+1)},\mu_t^{(n)})- \si(t,X_t^{(n)},\mu_t^{(n-1)}),\\
&B_t^{(n)}= b(t,X_t^{(n+1)}, \mu_t^{(n)} ) - b(t,X_t^{(n)}, \mu_t^{(n-1)} ).\end{align*}
 By {\bf(A2')} and It\^o's formula,  there exists an increasing function $K_1:\R_+\to\R_+$ such that
\begin{align*}
|\xi^{(n)}(t)-(D(X^{(n+1)}_t)-D(X^{(n)}_t))|^2 & \le 2 \left|\int_0^t\LL_s^{(n)} \d W(s)\right|^2\\
&+  K_1(t)\int_0^t \big\{\|\xi_s^{(n)}\|_{\infty}^2 + \W_2(\mu_s^{(n)}, \mu_s^{(n-1)})^2\big\}\d s.
\end{align*}
Again using inequality $(x+y)^2\leq \frac{x^2}{\kappa}+\frac{y^2}{1-\kappa}$, we have
\begin{align*}
|\xi^{(n)}(t)|^2&\leq \kappa\|\xi_t^{(n)}\|_{\infty}^2+ \frac{2}{1-\kappa} \int_0^t\LL_s^{(n)} \d W(s)\\
&+  \frac{K_1(t)}{1-\kappa}\int_0^t \big\{\|\xi_s^{(n)}\|_{\infty}^2 + \W_2(\mu_s^{(n)}, \mu_s^{(n-1)})^2\big\}\d s.
\end{align*}
By the BDG inequality and noting $\kappa\in(0,1)$, we obtain
 \beg{align*} \E \sup_{s\in [0,t]}  |\xi^{(n)}(s)|^2&\le K_2(t) \int_0^t \Big\{\E   \|\xi_s^{(n)}\|_{\infty}^2 + \W_2(\mu_s^{(n)}, \mu_s^{(n-1)})^2\Big\} \d s\\
  & \leq K_2(t) \int_0^t \Big\{\E   \sup_{r\in[0,s]}|\xi^{(n)}(r)|^2 + \W_2(\mu_s^{(n)}, \mu_s^{(n-1)})^2\Big\} \d s,\ \ t\ge 0  \end{align*}
 for some increasing function
$K_2: \R_+\to \R_+.$

By Gronwall's Lemma, and since $\W_2(\mu_s^{(n)}, \mu_s^{(n-1)})^2\le \E\|\xi_s^{(n-1)}\|_\infty^2$, we obtain
\beg{align*} & \E \sup_{s\in [0,t]}  |\xi^{(n)}(s)|^2 \le tK_2(t) \e^{tK_2(t)} \sup_{s\in [0,t]} \W_2(\mu_s^{(n)}, \mu_s^{(n-1)})^2\\
&\le tK_2(t) \e^{tK_2(t)}
  \E  \sup_{s\in [0,t]}|\xi^{(n-1)}(s)|^2,\ \  \ t\ge 0.\end{align*}
 Taking $t_0>0$ such that $  t_0K_2(T) \e^{t_0K_2(T)}\le \e^{-1}$, we arrive  at
 $$\E \sup_{s\in [0,t_0]} |\xi^{(n)}(s)|^2\le \e^{-1}\E \sup_{s\in [0,t_0]} |\xi^{(n-1)}(s)|^2,\ \ n\ge 1.$$ Since
 $$\E \sup_{s\in [0,t_0]} |\xi^{(0)}(s)|^2\le 2  \E  \Big\{|X(0)|^2+ \sup_{s\in [0,t_0]} |X^{(1)}(s)|^2\Big\}\le 4 \E \sup_{s\in [0,t_0]}  |X^{(1)}(s)|^2,$$ we obtain \eqref{*3}.
\end{proof}
\begin{proof}[Proof of Theorem \ref{TEU}] ({\bf Existence}) For simplicity, we only consider  $s=0$ and denote $X_{0,}=X$; i.e.\,$X_{0,}(t)=X(t), X_{0,t}=X_t, t\ge 0$.

 Let $(X_t)_{t\in [0,t_0]}$ be the unique limit of $(X_t^{(n)})_{t\in [0,t_0]}$ in  Lemma \ref{L2.1}, then  $(X_t)_{t\in [0,t_0]}$ is an adapted continuous process  and  satisfies
\beq\label{A01}\lim_{n\to\infty} \sup_{t\in [0,t_0]} \W_2(\mu_t^{(n)},\mu_t)^2\le \lim_{n\to\infty} \E \sup_{t\in [0,t_0]}  |X^{(n)}(t)- X(t)|^2=0,
\end{equation}
where $\mu_t$ is the distribution of $X_t$. Rewriting \eqref{EN}, we have
$$X^{(n)}(t)-D(X^{(n)}_t)=X(0)-D(X_0)+ \int_0^t  b(s,X^{(n)}_s,\mu_s^{(n-1)}) \d s +\int_0^t\si(s,X^{(n)}_s,\mu_s^{(n-1)})\d W(s). $$
Then   \eqref{A01}, {\bf(A2')}, {\bf(A3')}, {\bf(A5')} and the dominated convergence theorem imply that $\P$-a.s.
$$X(t)-D(X_t)= X(0)-D(X_0)+\int_0^t b(s,X_s, \mu_s)\d s +\int_0^t \si(s,X_s, \mu_s)\d W(s),\ \ t\in [0,t_0].$$
Therefore, $(X_t)_{t\in [0,t_0]}$ solves \eqref{neq} up to time $t_0$. Moreover,    $ \E \sup_{s\in [0,t_0]} |X(s)|^2<\infty$  follows by  \eqref{A01}. The same holds for
$(X_{s,t})_{t\in [s,(s+t_0)\land T]}$ and $s\in [0,T]$. So, by solving the equation piecewise in time, and using the arbitrariness of $T>0$,
  we conclude that  \eqref{neq} has a strong solution
  $(X_t)_{t\ge 0}$ with
  \beq\label{*X} \E \sup_{s\in [0,t]}|X(s)|^2<\infty,\ \ \ t\geq 0.\end{equation}
{\bf Uniqueness} Let $X$ and $Y$ be two solutions to \eqref{neq}, i.e.
$$\mbox{d}\big[X(t)-D(X_t)\big]=b(t, X_t, \L_{X_t})\mbox{d}t+\sigma(t, X_t, \L_{X_t})\mbox{d}W(t),$$
and
$$\mbox{d}\big[Y(t)-D(Y_t)\big]=b(t, Y_t, \L_{Y_t})\mbox{d}t+\sigma(t, Y_t, \L_{Y_t})\mbox{d}W(t),$$
By {\bf(A2')}, we have
\begin{equation*}\begin{split}
|X(t)-Y(t)-(D(X_t)-D(Y_t))|^2\le& 2 \left|\int_0^t \{\si(s,X_s,\L_{X_s})-\si(s,Y_s,\L_{Y_s})\}\d W(s)\right|^2\\
+& \bb_1(t) \int_0^t\big\{\|X_s-Y_s\|_{\infty}^2+\W_2(\L_{X_s},\L_{Y_s})^2\big\}\d s
\end{split}\end{equation*}
for an increasing function $\beta_1:[0,\infty)\to [0,\infty)$.
Applying inequality $(x+y)^2\leq \frac{x^2}{\kappa}+\frac{y^2}{1-\kappa}$,  we have
\begin{equation*}\begin{split}
|X(t)-Y(t)|^2\le& \kappa\|X_t)-Y_t\|^2_{\infty}+\frac{2}{1-\kappa} \left|\int_0^t \{\si(s,X_s,\L_{X_s})-\si(s,Y_s,\L_{Y_s})\}\d W(s)\right|^2\\
+& \frac{\bb_1(t)}{1-\kappa} \int_0^t\big\{\|X_s-Y_s\|_{\infty}^2+\W_2(\L_{X_s},\L_{Y_s})^2\big\}\d s.
\end{split}\end{equation*}
Noting that $\W_2(\L_{X_t},\L_{Y_t})^2\le \E\|X_t-Y_t\|_{\infty}^2$, {\bf(A3')} and the BDG inequality imply that
$\gg_t:= \sup_{s\in [-r_0,t]} |X(s)-Y(s)|^2$ satisfies
$$\E\gg_t\le \beta_2(t) \int_0^t \E\gg_s \d s,\ \ t\ge 0$$
for an increasing function $\beta_2:[0,\infty)\to [0,\infty)$.
So, applying Gronwall's inequality implies
$$\E\gg_t=0, \ \ t\geq 0.$$

 ({\bf Weak uniqueness}) Since the proof is similar to that of \cite[Theorem 2.1]{W16}, we omit it here.
\end{proof}

\section{Comparison Theorem}
In order to obtain the comparison theorem for distribution dependent NSFDEs, we introduce the partial order on $\C$.  If
$x=(x_1, \cdots, x_n), y=(y_1, \cdots, y_n)\in \R^n$, we call $x\le y$ if and only if $x_i\le y_i, i=1, \ldots, n;$ $x< y$ if and only if $x\le y$
and $x\neq y; $  $x\ll y$ if and only if $x_i< y_i, i=1, \ldots, n.$ For $\xi=(\xi_1, \cdots, \xi_n), \eta=(\eta_1, \cdots, \eta_n)\in \C, $ we call $\xi\le \eta$ if and only if $\xi(\theta)\le \eta(\theta), \theta\in [-r_0, 0]; $ $\xi< \eta$ if and only if $\xi\le \eta$ and $\xi\neq \eta;$ $\xi\ll \eta$ if and only if $\xi(\theta)< \eta(\theta), \theta\in [-r_0, 0]; $ for any $\xi, \eta\in \C,$ $\xi\wedge \eta$ is defined by $(\xi\wedge\eta)_i= \xi_i\wedge\eta_i, i=1, \ldots, n.$ We also define the following partial order associated with the neutral term $D(\cdot),$ that is: $\xi\le_{D}\eta$ if and only if $\xi\le \eta $ and $\xi(0)-D(\xi)\le \eta(0)-D(\eta); $ $\xi<_{D} \eta$ if and only if $\xi\le_{D}\eta$ and
$\xi\neq \eta.$ A function $h$ on $\C$ is called increasing if $h(\xi)\le h(\eta)$ for $\xi\le \eta.$   Let $\mu_1, \mu_2 \in \mathscr{P}(\C ),$ we call
$\mu_1\le \mu_2$ if and only if $\mu_1(h)\le \mu_2(h)$ holds for all increasing function $h\in C_b(\C)$ which denotes all bounded continuous functions on $\C.$

Denote by $(X(s, \xi; t), \bar X(s, \bar \xi; t))_{t\geq s}$ the solutions to \eqref{neq}-\eqref{neq1} with $(X_s(s, \xi), \bar X_s(s, \bar \xi))=(\xi,\bar \xi)$. Let $(X_t(s, \xi), \bar X_t(s, \bar \xi))_{t\geq s}$ be the segment process.
\begin{defn}
The distribution dependent NSFDE \eqref{neq}-\eqref{neq1} is called D-order-preserving, if for any $s\ge 0$ and $\xi, \bar \xi\in L^2(\Omega \rightarrow \C, \mathscr{F}_s, \mathbb{P})$ with $\P(\xi\le_D \bar \xi)=1,$ one has
$$
\P(X_t(s, \xi)\le_D \bar X_t(s, \bar \xi),\ \ t\geq s)=1.
$$
\end{defn}
\begin{defn} A function $f:\C\to\mathbb{R}^1$ is called  $D$-increasing, if for any $\xi\leq_D\eta$, it holds $f(\xi)\leq f(\eta)$. If two probability  measures $\mu,\nu$ on $\C$ satisfying $\mu(f)\leq \nu(f)$ for any $D$-increasing function $f$, then we denote $\mu\leq _D\nu$.
\end{defn}
\begin{rem}\label{pc} In fact, if $\mu\leq_D\nu$, by \cite[Theorem 5]{KKO}, there exists $\pi\in \mathbf{C}(\mu,\nu)$ with
$$\pi(\{(\xi_1,\xi_2), \xi_1\leq _D\xi_2\})=1.$$
\end{rem}

\subsection{Sufficient Conditions for Comparison Theorem}

In this subsection, we will extend the result in \cite{bai1}  and provide sufficient conditions such that the comparison theorem holds. Due to the difficulty caused by the  distribution dependence, the generalization is not trivial.

\begin{thm}\label{main}
Let ({\bf{A1}})-{\bf(A5)} hold and $b, \bar b$ and $\si, \bar \si$ are continuous on $[0,\infty)\times \C\times \scr P_2(\C)$. Assume $\xi\leq _D\bar{\xi}$ and the following conditions hold.
\begin{enumerate}
\item[{\rm{(i)}}] The drift terms $b=(b_1,  \ldots, b_n)$ and $\bar b=(\bar b_1,  \ldots, \bar b_n)$  are continuous in $t$ and  $b_i(t, \eta, \mu)\le \bar b_i(t, \bar \eta, \bar \mu)$ for any $1\le i \le n$  provided $\mu, \bar \mu\in \mathscr{P}_2(\C)$ with $\mu\le_D\bar \mu$,  $\eta, \bar \eta\in \C$ with $\eta\le_D \bar \eta$ and $\eta^i(0)-D^i(\eta)=\bar{\eta}^i(0)-D^i(\bar{\eta})$.

\item[{\rm{(ii)}}] The diffusion terms $\si=(\si_{ij})$ and $\bar \si=(\bar \si_{ij})$ are continuous in $t$ and
$\si=\bar \si.$ Moreover, $\si_{ij}(t,\eta,\mu)$ only depends on $t$ and $\eta^i(0)-D^i(\eta)$.
\end{enumerate}
 Then
 $\P(X_t(s,\xi)\le_D \bar X_t(s,\bar\xi), t \ge s)=1.$ Thus, $\P(X_t(s,\xi)\le\bar X_t(s,\bar\xi), t \ge s)=1.$
\end{thm}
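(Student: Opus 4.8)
The plan is to follow the classical Yamada–Watanabe–type scheme for pathwise comparison, adapted to the neutral, distribution-dependent setting. The core idea is to reduce the comparison of the segment processes to a comparison of the "shifted" processes $Y^i(t):=X^i(s,\xi;t)-D^i(X_t(s,\xi))$ and $\bar Y^i(t):=\bar X^i(s,\bar\xi;t)-D^i(\bar X_t(s,\bar\xi))$, because assumption {\bf(A3)}/(ii) says the diffusion coefficient $\sigma_{ij}(t,\eta,\mu)$ depends on $\eta$ only through $\eta^i(0)-D^i(\eta)$, so $\sigma$ is effectively a function of $t$ and the current value of $Y^i$. For each coordinate $i$ I would estimate $\E\big[((Y^i(t)-\bar Y^i(t))^+)^2\big]$ (or apply a Tanaka-type / $\phi_\varepsilon$-mollifier argument to $(Y^i-\bar Y^i)^+$) and show it vanishes, which forces $X^i_t\le_D$-type inequalities to propagate. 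One must work with all $n$ coordinates simultaneously because the drift condition (i) only gives $b_i\le\bar b_i$ when $\eta\le_D\bar\eta$ holds jointly in all coordinates and when equality holds in the $i$-th shifted coordinate.

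\textbf{Step 1 (local comparison on $[s,s+t_0]$ via the Picard iteration).} Rather than attack \eqref{neq}--\eqref{neq1} directly, I would run the comparison through the decoupled iteration \eqref{EN} used in Lemma~\ref{L2.1}. Fix the measure arguments at the $(k-1)$-th stage: if at stage $k-1$ we already know $\mu_t^{(k-1)}\le_D\bar\mu_t^{(k-1)}$, then the drift in the $k$-th equation for the barred system dominates that of the unbarred one in the sense of (i), and the diffusion coefficients coincide by (ii). This puts us in the \emph{distribution-free} neutral comparison situation, which is essentially \cite{bai1}: one obtains $\P(X^{(k)}_t(s,\xi)\le_D\bar X^{(k)}_t(s,\bar\xi),\ t\in[s,s+t_0])=1$ by the standard argument — apply It\^o's formula to a smooth convex approximation $\phi_\varepsilon$ of $(\cdot)^+$ applied to $Y^{i,(k)}-\bar Y^{i,(k)}$, use {\bf(A3)}/(ii) to kill the martingale part's contribution in the limit, use the Lipschitz bound {\bf(A2)} and the sign/monotonicity structure of the drift difference together with {\bf(A1)} on $D$, and conclude via Gronwall that the positive part is zero. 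The passage from $\le_D$ on the shifted coordinates back to $\le$ on the segments uses {\bf(A1)}: $D$ is itself increasing, so $\xi\le_D\bar\xi$ plus the shifted inequality gives $\xi\le\bar\xi$.

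\textbf{Step 2 (pass the stagewise comparison to the limit).} By Lemma~\ref{L2.1}, $X^{(k)}\to X$ and $\bar X^{(k)}\to\bar X$ in $L^2$ uniformly on $[s,s+t_0]$, hence (along a subsequence) a.s.\ uniformly, so $\{X_t\le_D\bar X_t,\ t\in[s,s+t_0]\}$ holds a.s.; in particular $\L_{X_t}\le_D\L_{\bar X_t}$ for $t\in[s,s+t_0]$ — here one needs that $\xi\le_D\eta$ is a closed condition under uniform convergence, which it is. This closes the induction: the hypothesis $\mu^{(k-1)}\le_D\bar\mu^{(k-1)}$ needed in Step~1 is valid at every stage because $\mu^{(0)}=\L_\xi\le_D\L_{\bar\xi}=\bar\mu^{(0)}$ by hypothesis and propagates. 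Then iterate over successive intervals of length $t_0$ (the Markov/flow property lets us restart at $s+t_0$ with $\xi':=X_{s+t_0}(s,\xi)\le_D\bar X_{s+t_0}(s,\bar\xi)=:\bar\xi'$), and by arbitrariness of $T$ we get $\P(X_t(s,\xi)\le_D\bar X_t(s,\bar\xi),\ t\ge s)=1$, whence $\P(X_t\le\bar X_t,\ t\ge s)=1$.

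\textbf{The main obstacle} I anticipate is handling the It\^o correction and the martingale term in Step~1 cleanly in the neutral setting. Because the comparison must be done on $Y^i(t)=X^i(t)-D^i(X_t)$ rather than on $X^i$ itself, one applies It\^o only to the semimartingale $Y^i$, whose quadratic variation involves $\sum_j\sigma_{ij}^2$ evaluated at the current states; the key cancellation is that on the set $\{Y^i=\bar Y^i\}$ the two diffusion rows agree (by (ii) plus $\sigma=\bar\sigma$), so the local-time / $\phi_\varepsilon''$ term drops out in the $\varepsilon\to0$ limit exactly as in the one-dimensional Yamada–Watanabe estimate. A secondary subtlety is that the drift comparison in (i) is only available when $\eta^i(0)-D^i(\eta)=\bar\eta^i(0)-D^i(\bar\eta)$, i.e.\ precisely on $\{Y^i=\bar Y^i\}$; one must therefore split the drift difference $b_i(t,X_t,\mu_t)-\bar b_i(t,\bar X_t,\bar\mu_t)$ by adding and subtracting $\bar b_i(t,\zeta_t,\bar\mu_t)$ for an auxiliary path $\zeta$ that agrees with $\bar X_t$ in the $i$-th shifted coordinate, controlling the correction term by the Lipschitz bound {\bf(A2)} and by $|Y^i(t)-\bar Y^i(t)|$, which is acceptable inside a Gronwall argument. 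Finally, one should check that the partial orders $\le_D$ and the measure-order $\le_D$ interact correctly with the coupling from Remark~\ref{pc}, so that whenever the measure arguments satisfy $\mu_t\le_D\bar\mu_t$ one can indeed invoke condition (i) — but since (i) is stated directly in terms of $\mu\le_D\bar\mu$, this is immediate.
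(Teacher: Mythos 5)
Your overall skeleton coincides with the paper's: freeze the measure arguments via the Picard iteration \eqref{EN}, observe that the inductive hypothesis $\mu_t^{(k-1)}\le_D\bar\mu_t^{(k-1)}$ turns each stage into a distribution-free neutral comparison problem, propagate the order through the induction, and pass to the limit using the convergence \eqref{A01}. That part is sound and is exactly what the paper does.

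The genuine gap is in your Step 1, where you claim the distribution-free stage ``is essentially \cite{bai1}'' and can be settled with the non-strict inequality $b_i\le\bar b_i$ by a Tanaka/Gronwall estimate on $\big((Y^i-\bar Y^i)^+\big)^2$. The distribution-free result available (the paper's Remark \ref{cc}, i.e.\ \cite[Theorem 3.1]{bai1} combined with the nonlinear-$D$ version of its Proposition 3.1, namely Proposition \ref{com}) requires the \emph{strict} inequality $b_i<\bar b_i$; it is proved by a stopping-time/contradiction argument at the first touching time, not by Gronwall. Your direct Gronwall sketch does not close: condition (i) yields $b_i(t,\eta,\mu)\le\bar b_i(t,\bar\eta,\bar\mu)$ only when the \emph{full} segment order $\eta\le_D\bar\eta$ holds \emph{and} $\eta^i(0)-D^i(\eta)=\bar\eta^i(0)-D^i(\bar\eta)$, whereas the drift difference in your estimate must be controlled on the event $\{Y^i>\bar Y^i\}$, where neither condition is available (this is precisely the circularity you flag as a ``secondary subtlety''). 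The auxiliary path $\zeta$ you propose cannot repair this: to have both $\zeta\le_D\bar X_t$ and $\zeta^i(0)-D^i(\zeta)=\bar X^i(t)-D^i(\bar X_t)$ one is forced, as in Lemma \ref{partial}(1), to shift the whole candidate path (e.g.\ $X_t\wedge\bar X_t$) downward by an amount governed by the \emph{negative} part of $X^i-\bar X^i$ and by the neutral term, so $\|X_t-\zeta\|_\infty$ is not bounded by $(Y^i-\bar Y^i)^+$ plus the positive parts, and the Lipschitz correction from {\bf(A2)} is not of Gronwall-compatible size. The paper resolves this by a two-step scheme that your proposal omits: first prove the theorem with $b_i<\bar b_i$ (via Remark \ref{cc} inside the iteration), then recover the case $b_i\le\bar b_i$ by perturbing the drift, $\bar b_\varepsilon=\bar b+\vec\varepsilon$, applying the strict case to $(b,\bar b_\varepsilon)$, and letting $\varepsilon\downarrow0$ using the stability Lemma \ref{app}. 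Without the strict-inequality reduction and this $\varepsilon$-perturbation (or some substitute one-sided condition), your Step 1 does not go through.
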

In the following, for simplicity, let $s=0$, $X(t)=X(s,\xi;t), \bar X(t)=\bar X(s,\bar \xi;t), X_D(t)=X(t)-D(X_t), \bar X_D(t)=\bar X(t)-D(\bar X_t).$

Define the following stopping times:
\begin{align*}
&\rho_i=\inf\{t >0: X^i(t)> \bar X^i(t)\}, \ \ i=1, 2, \ldots, n,\\
&\Upsilon_i=\inf\{t >0: X^i_{D}(t)> \bar X^i_{D}(t)\}, \ \ i=1, 2, \ldots, n.
\end{align*}
Let $\rho=\min\{\rho_1, \ldots, \rho_n\}$ and $\Upsilon=\min\{\Upsilon_1, \ldots, \Upsilon_n\}$.
We firstly give a modified proof of \cite[Proposition 3.1]{bai1} which extends the result there to the case that $D$ is nonlinear.
\begin{prp}\label{com}
Assume {\bf(A1)} and {\bf(A5)} hold, then we have
\begin{equation}\label{c1}
\Upsilon \le \rho \mbox{ on  } \Omega.
\end{equation}
\end{prp}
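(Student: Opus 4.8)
The plan is to prove the (equivalent) inequality $\rho\ge\Upsilon$ pathwise on $\OO$, and the whole argument reduces to a single assertion: $X(s)\le\bar X(s)$ componentwise for every $s\in[-r_0,\Upsilon)$. Granting this, for each $i$ the set $\{t>0:X^i(t)>\bar X^i(t)\}$ cannot meet $(0,\Upsilon)$, so $\rho_i\ge\Upsilon$ and hence $\rho=\min_i\rho_i\ge\Upsilon$; the cases $\Upsilon=0$ (where $\rho\ge0=\Upsilon$ is automatic) and $\Upsilon=+\8$ need no extra work. On $[-r_0,0]$ the assertion is just $\xi\le\bar\xi$, which is contained in the standing hypothesis $\xi\le_D\bar\xi$, so the real task is to push the ordering forward on $[0,\Upsilon)$ using only that $X^i_D(t)\le\bar X^i_D(t)$ for all $i$ and all $t\in[0,\Upsilon)$ (immediate from $\Upsilon=\min_i\Upsilon_i$, together with $\xi\le_D\bar\xi$ at $t=0$) and assumptions {\bf(A1)} and {\bf(A5)}.

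The crucial step is a \emph{one-sided} Lipschitz estimate for the neutral term. Fixing $t\in[0,\Upsilon)$, I would introduce the truncation $Y_t:=X_t\wedge\bar X_t\in\C$: since $Y_t\le\bar X_t$, monotonicity {\bf(A1)} gives $D^i(Y_t)\le D^i(\bar X_t)$, while $X_t^l-Y_t^l=(X_t^l-\bar X_t^l)^+$ together with {\bf(A5)} gives
\[
\big|D^i(X_t)-D^i(Y_t)\big|\le\big|D(X_t)-D(Y_t)\big|\le\kk\max_{1\le l\le n}\sup_{u\in[t-r_0,t]}\big(X^l(u)-\bar X^l(u)\big)^+ .
\]
Adding these, and rewriting $X^i_D(t)\le\bar X^i_D(t)$ as $X^i(t)-\bar X^i(t)\le D^i(X_t)-D^i(\bar X_t)$, yields for every $i$ and every $t\in[0,\Upsilon)$
\[
\big(X^i(t)-\bar X^i(t)\big)^+\ \le\ \kk\,\max_{1\le l\le n}\ \sup_{u\in[t-r_0,t]}\big(X^l(u)-\bar X^l(u)\big)^+ .
\]

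To conclude I would fix $T\in[0,\Upsilon)$ and set $m(T):=\max_{1\le l\le n}\sup_{s\in[-r_0,T]}(X^l(s)-\bar X^l(s))^+$, which is finite because the paths are continuous on the compact interval $[-r_0,T]$. For $s\in[-r_0,0]$ the positive parts vanish by $\xi\le\bar\xi$, and for $s\in(0,T]$ the last display bounds them by $\kk\,m(T)$; taking the maximum over $l$ and supremum over $s\in[-r_0,T]$ gives $m(T)\le\kk\,m(T)$, whence $m(T)=0$ since $\kk\in(0,1)$. As $T<\Upsilon$ was arbitrary, $X(s)\le\bar X(s)$ on $[-r_0,\Upsilon)$, and the reduction in the first paragraph finishes the proof.

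The one point that takes real thought is the one-sided estimate: applying {\bf(A5)} directly to $D^i(X_t)-D^i(\bar X_t)$ only controls it by $\kk\max_l\|X^l_t-\bar X^l_t\|_\8$, which still carries the possibly large ``downward'' deviation $(\bar X^l_t-X^l_t)^+$ and so does not close the iteration. Truncating $\bar X_t$ to $X_t\wedge\bar X_t$ and discarding the harmless side via {\bf(A1)} is exactly what reduces the bound to positive parts, after which the contraction $\kk<1$ does everything; this device is also what upgrades the statement from the linear neutral term of \cite{bai1} to the present nonlinear, contractive $D$. The rest is routine.
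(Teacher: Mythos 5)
Your proof is correct, but it takes a genuinely different route from the paper's. The paper argues by contradiction through the approximating crossing times $\rho_i^l=\inf\{t>0:\ X^i(t)>\bar X^i(t)+\frac1l\}$: if $\rho^l<\Upsilon$ at some $\omega_0$, then at the crossing time the minimizing component $n_0$ satisfies $X^{n_0}(\rho^l)=\bar X^{n_0}(\rho^l)+\frac1l$ while $X^{n_0}_D(\rho^l)\le\bar X^{n_0}_D(\rho^l)$; since the whole segment obeys $X_{\rho^l}\le \bar X_{\rho^l}+\frac1l\mathbf{e}$ (here, exactly as in your proof, the standing hypothesis $\xi\le_D\bar\xi$ is used for the portion of the segment lying in $[-r_0,0)$), monotonicity {\bf(A1)} allows $D^{n_0}(X_{\rho^l})$ to be replaced by $D^{n_0}(\bar X_{\rho^l}+\frac1l\mathbf{e})$, and {\bf(A5)} then gives $\frac1l\le\frac{\kk}{l}$, a contradiction since $\kk<1$. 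You instead prove the equivalent statement $X\le\bar X$ on $[-r_0,\Upsilon)$ directly, showing that the running maximum $m(T)$ of the positive parts $(X^i-\bar X^i)^+$ satisfies the contraction $m(T)\le\kk\, m(T)$; your truncation $Y_t=X_t\wedge\bar X_t$, with {\bf(A1)} giving $D^i(Y_t)\le D^i(\bar X_t)$ and {\bf(A5)} controlling $D^i(X_t)-D^i(Y_t)$ by the positive parts alone, plays the role that the shifted segment $\bar X_{\rho^l}+\frac1l\mathbf{e}$ plays in the paper. Both arguments are purely pathwise and rest on the same two structural facts about $D$ (monotonicity and the $\kk$-contraction with $\kk\in(0,1)$); yours dispenses with the level-$\frac1l$ stopping times and yields a quantitative sup-norm bound on the overshoot, while the paper's is a shorter localized contradiction at the first crossing. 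All steps of your argument check out, including the one-sided estimate and the degenerate cases $\Upsilon=0$ and $\Upsilon=\infty$.
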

\begin{proof}
Set
\begin{align}\label{theta}\rho_i^l:=\inf\{t >0: X^i(t)> \bar X^i(t)+\frac{1}{l}\}, \ \ i=1, 2, \ldots, n; l\geq 1,
\end{align}
and
$$\rho^l:=\min\{\rho_1^l, \ldots, \rho_n^l\}.$$
Then it is easy to see that for $i=1,\cdots,n$ and $0\leq t\leq \rho_i^l$,
\begin{align}\label{X_i}
X^i(\rho_i^l)= \bar X^i(\rho_i^l)+\frac{1}{l},\ \ X^i(t)\leq \bar X^i(t)+\frac{1}{l},
\end{align}
and $\rho=\inf\{\rho^l: l\geq 1\}$. Moreover, by the definition of $\rho_i$ and $\Upsilon_i$, one has
\begin{align}\label{rhoi}
X^i(\rho_i)= \bar X^i(\rho_i),\ \ X^i(t)\leq \bar X^i(t), \ \ i=1,\cdots,n; 0\leq t\leq \rho_i,
\end{align}
and
\begin{align}\label{lami}
X^i_{D}(\Upsilon_i)= \bar X^i_{D}(\Upsilon_i),\ \ X^i_{D}(t)\leq \bar X^i_{D}(t), \ \ i=1,\cdots,n; 0\leq t\leq \Upsilon_i.
\end{align}
We only need to prove $\Upsilon\leq \rho^l$ for any $l\geq 1$ and $\omega\in\Omega$. To this end, we assume that there exists a $l\geq 1$ and $\omega_0\in\Omega$ such that $\rho^l(\omega_0)<\Upsilon(\omega_0)$. Then there exists a $1\leq n_0=n_0(\omega_0)\leq n$ such that $\rho_{n_0}^l(\omega_0)=\rho^l(\omega_0)$. Then by \eqref{lami}, we have $ [X^{n_0}_{D}(\rho_{n_0}^l)](\omega_0)\leq [\bar X^{n_0}_{D}(\rho_{n_0}^l)](\omega_0)$. This together with \eqref{X_i} implies that $\frac{1}{l}+D^{n_0}(\bar X_{\rho_{n_0}^l}(\omega_0))-D^{n_0}(X_{\rho_{n_0}^l}(\omega_0))\leq 0$. This combining with \eqref{X_i} and the monotonicity of $D$ yields
$$\frac{1}{l}+D^{n_0}(\bar X_{\rho_{n_0}^l}(\omega_0))-D^{n_0}(\bar X_{\rho_{n_0}^l}(\omega_0)+\frac{1}{l})\leq 0.$$
By {\bf (A5)}, we obtain $\frac{1}{l}-\frac{\kappa}{l}\leq 0$. Since $\kappa \in (0, 1)$, this is a contradiction. Thus, we finish the proof.
\end{proof}
\begin{rem}\label{cc} With Proposition \ref{com} in hand, repeating the proof of \cite[Theorem 3.1]{bai1}, we obtain the following result:
If $b,\bar b$ and $\sigma,\bar{\sigma}$ do not depend on the distribution, under {\bf(A1)}-{\bf(A5)}, Theorem \ref{main} holds by replacing the condition $b_i(t, \eta)\le \bar b_i(t, \bar \eta)$ in (i) with $b_i(t, \eta)<\bar b_i(t, \bar \eta)$.
\end{rem}
Now we intend to prove the distribution dependent case.
\begin{proof}[Proof of Theorem \ref{main}]
We first prove the result in Theorem \ref{main} holds by replacing the condition $b_i(t, \eta, \mu)\le \bar b_i(t, \bar \eta, \bar \mu)$ in (i) with $b_i(t, \eta, \mu)<\bar b_i(t, \bar \eta, \bar \mu)$.
For any $n\ge 0,$ let $(X_{s,t}^{(n)})_{t\ge s}$ solve \eqref{EN} with $X_{s,s}^{(n)}=\xi$ and $ X_{s,t}^{(0)}=\xi, t\geq s$. Similarly,  let $$\bar{X}^{(0)}_{s,t}=\bar{\xi}, \ \ t\geq s,$$
and $(\bar{X}_{s,t}^{(n)})_{t\ge s}$ solve \eqref{EN} with $\bar{b}$ and $\bar{\sigma}$ in place of $b$ and $\sigma$ and $\bar{X}_{s,s}^{(n)}=\bar{\xi}$. Denote $\bar{\mu}_{s,t}^{(n-1)}:=\L_{\bar{X}_{s,t}^{(n-1)}}$. We should remark that $\{\bar X_{s,t}^{(0)}\}_{t\geq s}$ and $\{X_{s,t}^{(0)}\}_{t\geq s}$ are continuous $\C$-valued process.
Without loss of generality, we assume $s=0$ and omit the subscript $s$.
$$b^n(t,\eta)=b(t,\eta,\mu_t^{(n-1)}), \ \ \sigma^n(t,\eta)=\sigma(t,\eta,\mu_t^{(n-1)}),$$
and
$$\bar{b}^n(t,\eta)=\bar{b}(t,\eta,\bar{\mu}_t^{(n-1)}), \ \ \bar{\sigma}^n(t,\eta)=\bar{\sigma}(t,\eta,\bar{\mu}_t^{(n-1)}).$$

For $n=1$, since $\mu_t^{(0)}\leq_D \bar{\mu}_t^{(0)}$, by (i) and (ii) in Theorem \ref{main}, we have
\begin{enumerate}
\item[{\rm{(1)}}] $b^1$ and $\bar{b}^1$ are continuous in $t$ and  $b^1_i(t, \eta)< \bar b^1_i(t, \bar \eta)$ for any $1\le i \le n$  provided $\eta, \bar \eta\in \C$ with $\eta\le_D \bar \eta$ and $\eta^i(0)-D^i(\eta)=\bar{\eta}^i(0)-D^i(\bar{\eta})$.

\item[{\rm{(2)}}] The diffusion terms $\si^1=(\si^1_{ij})$ and $\bar \si^1=(\bar \si^1_{ij})$ are continuous in $t$ and
$\si^1=\bar \si^1.$ Moreover, $\si^1_{ij}(t,\eta)$ only depends on $t$ and $\eta^i(0)-D^i(\eta)$.
\end{enumerate}
Then by Remark \ref{cc}, it holds $\mathbb{P}$-a.s.
$$X^{(1)}_t\leq_D \bar{X}^{(1)}_t,\ \ t\geq 0.$$
Next, assume
$\mathbb{P}$-a.s.
$$X^{(n-1)}_t\leq_D \bar{X}^{(n-1)}_t,\ \ t\geq 0.$$
Repeating the proof for $b^n$, $\sigma^n$, $\bar{b}^n$, $\bar{\sigma}^n$, $X^{(n-1)}$ in place of $b^1$, $\sigma^1$, $\bar{b}^1$, $\bar{\sigma}^1$, $X^{(0)}$, we can prove
$\mathbb{P}$-a.s.
$$X^{(n)}_t\leq_D \bar{X}^{(n)}_t,\ \ t\geq 0.$$
By \eqref{A01}, we conclude
$\mathbb{P}$-a.s.
$$X_t\leq_D \bar{X}_t,\ \ t\geq 0,$$
and
$$X_t\leq\bar{X}_t,\ \ t\geq 0.$$
Then the required assertion follows.

In general, if the Assumption (i) in Theorem \ref{main} holds, then let $(\bar{b}_\varepsilon, \bar X^\varepsilon)$ be in Lemma \ref{app} below.  By the above conclusion, we have $\mathbb{P}$-a.s.
$$X_t\leq_D \bar{X}^\varepsilon_t, \ \ t\geq 0.$$ Letting $\varepsilon$ goes to $0$, it follows from Lemma \ref{app} below and the continuity of $D$ that $\mathbb{P}$-a.s.
$$X_t\leq_D \bar{X}_t, \ \ t\geq 0,$$
and  $$X_t\leq \bar{X}_t, \ \ t\geq 0.$$
Thus, we complete the proof.
\end{proof}
\begin{lem}\label{app} Let $\bar{b}_\varepsilon=\bar{b}+\vec{\varepsilon}$, here $\vec{\varepsilon}=(\varepsilon, \varepsilon, \cdots, \varepsilon)\in\mathbb{R}^n$ and $\varepsilon>0$. Let $\bar{X}^\varepsilon(t)$ solve \eqref{neq1} with $\bar{X}^\varepsilon_0=\bar{X}_0$ and $\bar{b}_\varepsilon$ in place of $\bar{b}$. If  the conditions in Theorem \ref{main} hold,   then for any $T>0$, it holds that
$$\lim_{\varepsilon\to0^+}\mathbb{E}\sup_{t\in[0,T]}|\bar X^\varepsilon(t)-\bar X(t)|=0.$$
\end{lem}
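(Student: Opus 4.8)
The plan is to estimate $\Delta(t):=\bar X^\varepsilon(t)-\bar X(t)$ by subtracting the two integral equations and controlling the neutral term. Since both $\bar X^\varepsilon$ and $\bar X$ solve \eqref{neq1} (the latter with $\bar b$, the former with $\bar b_\varepsilon=\bar b+\vec\varepsilon$) with the \emph{same} initial segment $\bar X_0^\varepsilon=\bar X_0$ and driven by the \emph{same} Brownian motion $W$, we have $\mathbb{P}$-a.s. for $t\in[0,T]$,
\begin{align*}
\Delta(t)-\big(D(\bar X_t^\varepsilon)-D(\bar X_t)\big)
&=\int_0^t\big\{\bar b(r,\bar X_r^\varepsilon,\L_{\bar X_r^\varepsilon})-\bar b(r,\bar X_r,\L_{\bar X_r})\big\}\d r
+\vec\varepsilon\, t\\
&\qquad+\int_0^t\big\{\bar\sigma(r,\bar X_r^\varepsilon,\L_{\bar X_r^\varepsilon})-\bar\sigma(r,\bar X_r,\L_{\bar X_r})\big\}\d W(r).
\end{align*}
First I would take absolute values, use the elementary inequality $|\Delta(t)|\le |\Delta(t)-(D(\bar X_t^\varepsilon)-D(\bar X_t))|+|D(\bar X_t^\varepsilon)-D(\bar X_t)|$, and apply {\bf(A5)} (valid since the conditions of Theorem \ref{main} are assumed), which gives $|D(\bar X_t^\varepsilon)-D(\bar X_t)|\le\kappa\max_i\|\bar X^{\varepsilon,i}-\bar X^i\|_{[0,t],\infty}\le\kappa\sup_{r\le t}|\Delta(r)|$ with $\kappa\in(0,1)$. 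After taking the supremum over $[0,t]$ and moving the $\kappa\sup_{r\le t}|\Delta(r)|$ term to the left-hand side, one is left with $(1-\kappa)\sup_{r\le t}|\Delta(r)|$ bounded by the three driving terms.

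Next I would take expectations and bound each term. The drift difference is handled by {\bf(A2)} (Lipschitz in the state variable and in $\mathbb{W}_2$), after a Cauchy–Schwarz step to pass from the $L^2$ Lipschitz bound to an $L^1$ bound; the $\mathbb{W}_2(\L_{\bar X_r^\varepsilon},\L_{\bar X_r})$ term is dominated by $(\E\|\bar X_r^\varepsilon-\bar X_r\|_\infty^2)^{1/2}$, so to close the estimate cleanly I would actually run the whole argument in $L^2$: set $\gamma(t):=\E\sup_{r\in[0,t]}|\Delta(r)|^2$, apply the Burkholder–Davis–Gundy inequality to the stochastic integral together with {\bf(A3)} (note {\bf(A3)} bounds $\|\bar\sigma(r,\bar X_r^\varepsilon,\cdot)-\bar\sigma(r,\bar X_r,\cdot)\|^2$ by $L|\bar X^i_D(r)^\varepsilon-\bar X^i_D(r)|^2\le L'\sup_{u\le r}|\Delta(u)|^2$ using again {\bf(A5)}), and bound $|\vec\varepsilon\,t|^2\le n\varepsilon^2T^2$. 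Collecting terms yields, for a constant $C=C(L,\kappa,T)$,
$$\gamma(t)\le C\varepsilon^2 + C\int_0^t\gamma(r)\,\d r,\qquad t\in[0,T].$$
Gronwall's lemma then gives $\gamma(T)\le C\varepsilon^2\e^{CT}$, and hence $\E\sup_{t\in[0,T]}|\bar X^\varepsilon(t)-\bar X(t)|\le\sqrt{\gamma(T)}\le\sqrt{C}\,\e^{CT/2}\,\varepsilon\to0$ as $\varepsilon\to0^+$, as desired.

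The only genuine subtlety — not really an obstacle, but the one point needing care — is the neutral term: one must absorb $D(\bar X_t^\varepsilon)-D(\bar X_t)$ onto the left-hand side using $\kappa<1$ \emph{before} applying Gronwall, rather than treating it as an error term; this is exactly the mechanism used in the proof of Lemma \ref{L2.1} and in the uniqueness part of Theorem \ref{TEU}, so the estimates there can be reused almost verbatim. One should also check that $\bar X^\varepsilon$ is well defined with $\E\sup_{t\in[0,T]}|\bar X^\varepsilon(t)|^2<\infty$: this follows from Theorem \ref{TEU}, since adding the constant vector $\vec\varepsilon$ to $\bar b$ preserves {\bf(A2')}, {\bf(A3')}, {\bf(A4)} and {\bf(A5')}.
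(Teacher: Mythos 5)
Your argument is correct: the paper itself omits the proof of Lemma \ref{app} as ``standard'', and what you give is exactly the expected standard argument --- subtract the two equations, absorb the neutral term via {\bf(A5)} and $\kappa<1$ (the same mechanism as in Lemma \ref{L2.1} and the uniqueness part of Theorem \ref{TEU}), then use {\bf(A2)}, {\bf(A3)}, BDG and Gronwall to get an $O(\varepsilon)$ bound in $L^2$, which implies the stated $L^1$ convergence. No gaps; the only points needing care (finiteness of the second moments of $\bar X^\varepsilon$ so that the absorption and Gronwall steps are legitimate, and the $\varepsilon$-independence of the constants) are ones you explicitly address.
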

The proof is standard, we omit it here.

\subsection{Necessary Conditions for Comparison Theorem }
In this subsection, we show the conditions in Theorem \ref{main}  are also necessary. To this end, we firstly introduce a lemma.
\begin{lem}\label{partial}
\begin{enumerate}
\item[(1)]For any $1\leq i\leq n$, $\xi,\eta\in\C$ with  $\xi^i(0)-D^i(\xi)=\eta^i(0)-D^i(\eta)$, there exists $\zeta\in\C$ such that $\zeta\leq \xi\land\eta$ and $\zeta^i(0)-D^i(\zeta)=\xi^i(0)-D^i(\xi)=\eta^i(0)-D^i(\eta)$.
\item[(2)] For $\mu,\nu\in \scr P_2(\C)$, there exists $\tilde{\mu}\in \scr P_2(\C)$ such that $\tilde{\mu}\leq_D \mu$ and $\tilde{\mu}\leq_D \nu$.
 \end{enumerate}
\end{lem}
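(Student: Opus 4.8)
The plan is to prove both parts with the same device: lower $\xi\wedge\eta$ (pointwise in part (1), and along a coupling in part (2)) by a constant vector, and use the contraction bound of {\bf(A5)} to see that this decreases $\eta^i(0)-D^i(\eta)$ at rate at least $1-\kappa>0$, so that the target value can be hit (part (1)) or overshot in every coordinate by a finite shift (part (2)).

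\emph{Part (1).} I would set $\psi:=\xi\wedge\eta\in\C$. By {\bf(A1)} and the definition of $\wedge$ one has $\psi\leq\xi$ and $\psi\leq\eta$, hence $D^i(\psi)\leq D^i(\xi)$ and $D^i(\psi)\leq D^i(\eta)$; splitting according to whether $\psi^i(0)=\xi^i(0)$ or $\psi^i(0)=\eta^i(0)$ and using $\xi^i(0)-D^i(\xi)=\eta^i(0)-D^i(\eta)=:c$, this gives $\psi^i(0)-D^i(\psi)\geq c$. For $\lambda\geq0$ let $\zeta_\lambda\in\C$ agree with $\psi$ in every coordinate $j\neq i$ and equal $\psi^i-\lambda$ in the $i$-th coordinate, and put $g(\lambda):=\zeta_\lambda^i(0)-D^i(\zeta_\lambda)=\psi^i(0)-\lambda-D^i(\zeta_\lambda)$. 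Since $\zeta_\lambda$ and $\zeta_{\lambda'}$ differ only in the $i$-th coordinate, and there by $|\lambda-\lambda'|$, {\bf(A5)} gives $|D^i(\zeta_\lambda)-D^i(\zeta_{\lambda'})|\leq\kappa|\lambda-\lambda'|$, so $g$ is continuous; and {\bf(A1)} makes $\lambda\mapsto D^i(\zeta_\lambda)$ nonincreasing, so for $\lambda>\lambda'$ one gets $g(\lambda)-g(\lambda')\leq-(1-\kappa)(\lambda-\lambda')<0$. Thus $g$ is continuous and strictly decreasing with $g(0)\geq c$ and $g(\lambda)\leq g(0)-(1-\kappa)\lambda\to-\infty$; the intermediate value theorem yields $\lambda^\ast\geq0$ with $g(\lambda^\ast)=c$, and $\zeta:=\zeta_{\lambda^\ast}\leq\psi=\xi\wedge\eta$ satisfies $\zeta^i(0)-D^i(\zeta)=c$. (Equivalently, $\lambda\mapsto\psi^i(0)-c-D^i(\zeta_\lambda)$ is a $\kappa$-contraction of $\R$ which is nondecreasing, so Banach's fixed point theorem plus monotone iteration from $0$ gives a fixed point $\geq0$.)

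\emph{Part (2).} Fix any $\pi\in\mathbf C(\mu,\nu)$, say $\pi=\mu\otimes\nu$. For $(\xi,\eta)\in\C\times\C$ write $\psi=\xi\wedge\eta$ and define
$$\lambda(\xi,\eta):=\frac{1}{1-\kappa}\Big(\max_{1\leq i\leq n}|\psi^i(0)-D^i(\psi)|+\max_{1\leq i\leq n}|\xi^i(0)-D^i(\xi)|+\max_{1\leq i\leq n}|\eta^i(0)-D^i(\eta)|\Big),$$
which is continuous in $(\xi,\eta)$ since $D$ is Lipschitz by {\bf(A5)}; set $\zeta(\xi,\eta):=\psi-\lambda(\xi,\eta)\vec 1$ with $\vec 1=(1,\dots,1)\in\R^n$. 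By {\bf(A1)} and {\bf(A5)}, $D^i(\psi-\lambda\vec 1)\leq D^i(\psi)$ and $D^i(\psi)-D^i(\psi-\lambda\vec 1)\leq\kappa\lambda$, hence $\zeta^i(0)-D^i(\zeta)\leq[\psi^i(0)-D^i(\psi)]-(1-\kappa)\lambda$, and the choice of $\lambda$ forces the right side to be $\leq(\xi^i(0)-D^i(\xi))\wedge(\eta^i(0)-D^i(\eta))$ for every $i$; together with $\zeta\leq\psi\leq\xi$ and $\zeta\leq\psi\leq\eta$ this gives $\zeta(\xi,\eta)\leq_D\xi$ and $\zeta(\xi,\eta)\leq_D\eta$. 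From {\bf(A1)} ($D(0)=0$) and {\bf(A5)} one has $|D(\cdot)|\leq\kappa\|\cdot\|_\infty$, whence $\lambda(\xi,\eta)\leq C(\|\xi\|_\infty+\|\eta\|_\infty)$ and $\|\zeta(\xi,\eta)\|_\infty\leq C'(\|\xi\|_\infty+\|\eta\|_\infty)$ for constants depending only on $\kappa,n$; since $(\xi,\eta)\mapsto\zeta(\xi,\eta)$ is continuous, $\tilde\mu:=\L_{\zeta(\xi,\eta)}$ under $\pi$ is a well-defined element of $\scr P_2(\C)$. Finally $(\zeta,\xi)$ (resp.\ $(\zeta,\eta)$) under $\pi$ is a coupling of $\tilde\mu$ and $\mu$ (resp.\ $\nu$) carried by $\{\zeta_1\leq_D\zeta_2\}$, so $\tilde\mu(f)=\E_\pi f(\zeta)\leq\E_\pi f(\xi)=\mu(f)$ for every $D$-increasing $f$, i.e.\ $\tilde\mu\leq_D\mu$, and likewise $\tilde\mu\leq_D\nu$.

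The only genuine use of the hypotheses is the contraction bound in {\bf(A5)}: it is what makes lowering a path \emph{strictly} decrease $\eta^i(0)-D^i(\eta)$, hence $g$ invertible in (1) and a finite shift sufficient in (2). I expect the main obstacle to be purely bookkeeping, namely choosing $\lambda(\xi,\eta)$ simultaneously measurable in $(\xi,\eta)$, large enough uniformly over the $n$ coordinates, and still dominated by $\|\xi\|_\infty+\|\eta\|_\infty$ so that $\tilde\mu$ stays in $\scr P_2(\C)$; the rest (the two-case check $g(0)\geq c$, the moment estimate, and reading off $\tilde\mu\leq_D\mu$ from the coupling via the easy direction of Remark \ref{pc}) is routine.
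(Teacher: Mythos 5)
Your argument is correct, and its core device is the one the paper uses: lower $\xi\wedge\eta$ (resp., under the product coupling $\mu\times\nu$, a path lying below both coordinates) by a constant and exploit the margin $1-\kappa$ coming from {\bf(A5)} together with the monotonicity in {\bf(A1)}. The differences are still worth recording. In part (1) the paper applies the intermediate value theorem to the auxiliary map $h^i(r)=r-D^i(r\mathbf{e})$ and then concludes through the identity $D^i(\xi\wedge\eta-v\mathbf{e})=D^i(\xi\wedge\eta)-D^i(v\mathbf{e})$, which is immediate for linear $D$ but requires extra justification for a general $D$ satisfying only {\bf(A1)} and {\bf(A5)}; you instead apply the intermediate value theorem directly to $g(\lambda)=\zeta_\lambda^i(0)-D^i(\zeta_\lambda)$, using the Lipschitz bound and monotonicity of $D$ to get strict decrease at rate at least $1-\kappa$, which bypasses that identity entirely (and you only need to lower the $i$-th coordinate, not all of them). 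In part (2) both proofs work on $(\C^2,\B(\C^2),\mu\times\nu)$ and shift down by an explicit amount of order $(1-\kappa)^{-1}$; the paper drops all the way to a constant path $\tilde\alpha\mathbf{e}$, while you subtract $\lambda(\xi,\eta)\mathbf{e}$ from $\xi\wedge\eta$, and you spell out the continuity/measurability of the shift, the bound $\|\zeta(\xi,\eta)\|_\infty\le C'(\|\xi\|_\infty+\|\eta\|_\infty)$ ensuring $\tilde\mu\in\scr P_2(\C)$, and the passage from the almost-sure ordering of the coupling to $\tilde\mu\le_D\mu$, $\tilde\mu\le_D\nu$, points the paper leaves implicit. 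Net effect: same strategy, but your formulation of (1) is the more robust one for nonlinear $D$.
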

\begin{proof} (1)
Fix $1\leq i\leq n$, $\xi,\eta\in\C$ with  $\xi^i(0)-D^i(\xi)=\eta^i(0)-D^i(\eta)$. Without loss of generality, assume $\xi^i(0)\leq \eta^i(0)$. If $D^i(\xi\wedge\eta)=D^i(\xi)$, let $\zeta=\xi\wedge\eta$. Otherwise, ${\bf (A1)}$ implies $D^i(\xi\wedge\eta)<D^i(\xi)$. Let $\mathbf{e}\in\C$ be defined by $\mathbf{e}_i(s)=1, s\in[-r_0,0], 1\leq i\leq n $. Define
\begin{align}\label{hhh}
h^i(r)=r-D^i(r\mathbf{e}), \ \  r\in\mathbb{R}.
\end{align} By {\bf(A5)} and $D(0)=0$, we have $h^i(r)\geq r(1-\kappa), r\geq 0$ and $h^i(r)\leq  r(1-\kappa), r\leq  0$. The continuity of $D$ implies that there exists a constant $v>0$ such that
\begin{align}\label{vvv}
h^i(v)=v-D^i(v\mathbf{e})=[(\xi\wedge\eta)^i(0)-D^i(\xi\wedge\eta)]-[\xi^i(0)-D^i(\xi)].
\end{align}
Let $\zeta=\xi\wedge\eta-v \mathbf{e}$, then it is clear that $\zeta\leq \xi\wedge\eta$. Moreover, it follows from \eqref{vvv}
\begin{align*}\zeta^i(0)-D^i(\zeta)&=(\xi\wedge\eta)^i(0)-v-(D^i(\xi\wedge\eta)- D^i(v\mathbf{e})) =\xi^i(0)-D^i(\xi).
\end{align*}

(2) Fix $\mu,\nu\in \scr P_2(\C)$. Let two $\C$-valued random variables  $(\Gamma_1,\Gamma_2)$ on $(\C^2,\B(\C^2),\mu\times \nu)$ be defined as $\Gamma_k(\xi_1,\xi_2)=\xi_k, k=1,2$. Then $\L_{\Gamma_1}|\mu\times\nu=\mu$ and  $\L_{\Gamma_2}|\mu\times\nu=\nu$.
 Basing on this, we can construct a $\C$-valued random variable $\tilde{\Gamma}$ on $(\C^2,\B(\C^2),\mu\times \nu)$ such that $\tilde{\Gamma}\leq \Gamma_k$ and $\tilde{\Gamma}(0)-D(\tilde{\Gamma})\leq \Gamma_k(0)-D(\Gamma_k)$, $k=1,2$. Let $\tilde{\mu}=\L_{\tilde{\Gamma}}|\mu\times\nu$. Then we have $\tilde{\mu}\leq_D \mu$ and $\tilde{\mu}\leq_D \nu$.

 In fact, for any $i=1,\cdots,n$, let $h^i$ be defined in \eqref{hhh}. For any $(\xi_1,\xi_2)\in\C^2$, let $$\alpha= -\left|\min_{s\in[-r_0,0]}(\xi_1\wedge\xi_2)(s)\right|,$$ then $\alpha \mathbf{e}\leq \xi_1\wedge\xi_2$.
Similarly, let $$\alpha_i= \frac{-|(\xi^i(0)-D^i(\xi))\wedge(\eta^i(0)-D^i(\eta))|}{1-\kappa},$$
then $$\alpha_i-D^i(\alpha_i \mathbf{e})\leq (\xi^i(0)-D^i(\xi))\wedge(\eta^i(0)-D^i(\eta)).$$
Let $\tilde{\alpha}=\alpha\wedge \min_i\alpha_i$ and $\tilde{\Gamma}(\xi_1,\xi_2)=\tilde{\alpha}\mathbf{e}, (\xi_1,\xi_2)\in\C^2$.
Thus, we finish the proof.
\end{proof}
\begin{thm}\label{main'} Let ({\bf{A1}})-{\bf(A5)} hold. Assume that
$\eqref{neq}$-$\eqref{neq1}$ is D-order-preserving
for any complete filtration probability space $(\OO,\{\F_t\}_{t\ge 0},\P)$ and $m$-dimensional Brownian motion $W(t)$ thereon.  Then  for any $1\leq i\leq n$, $\mu,\nu\in \scr P_2(\C)$ with $\mu\le_D \nu$, and $\xi,\eta\in\C$ with $\xi\le_D \eta$ and $\xi^i(0)-D^i(\xi)=\eta^i(0)-D^i(\eta)$, the following assertions hold:
\beg{enumerate}
\item[$(i')$]  $b_i(t,\xi,\mu)\leq \bar{b}_i(t, \eta, \nu)$ if  $b_i$ and $\bar b_i$ are continuous  at points $(t,\xi,\mu)$ and $(t, \eta,\nu)$ respectively.
\item[$(ii')$] For any $1\le j\le m$, $\sigma_{ij}(t,\xi,\mu)=\bar\si_{ij}(t, \eta,\nu)$ if $\si_{ij}$ and $\bar \si_{ij}$ are continuous   at points $(t,\xi,\mu)$ and $(t, \eta,\nu)$ respectively.
\end{enumerate}
Consequently,
when $b, \bar b$ and $\si, \bar \si$ are continuous on $[0,\infty)\times \C\times \scr P_2(\C)$, conditions $(i)$ with $\mu\leq_D\bar{\mu}$ in place of $\mu\leq \bar{\mu}$ and $(ii)$ hold.
\end{thm}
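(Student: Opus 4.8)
The plan is to prove the necessity by contradiction, exhibiting deterministic initial data and a suitable probability space on which $D$-order preservation fails unless $(i')$ and $(ii')$ hold. Fix $1\le i\le n$, measures $\mu\le_D\nu$ in $\scr P_2(\C)$, and $\xi\le_D\eta$ in $\C$ with $\xi^i(0)-D^i(\xi)=\eta^i(0)-D^i(\eta)$, and suppose $b_i,\bar b_i$ (resp.\ $\si_{ij},\bar\si_{ij}$) are continuous at the relevant points. To realize $\L_{X_t}=\mu$ and $\L_{\bar X_t}=\nu$ at the initial time $t=0$ is impossible for deterministic $\xi,\eta$, so instead I would run the equations from time $s=0$ with \emph{random} initial segments: use Lemma \ref{partial}(2) to pick $\tilde\mu\le_D\mu$ and $\tilde\mu\le_D\nu$, and Lemma \ref{partial}(1) to find $\zeta\le\xi\wedge\eta$ with $\zeta^i(0)-D^i(\zeta)=\xi^i(0)-D^i(\xi)$; then build $\F_0$-measurable random variables $\Xi,\bar\Xi$ with $\L_\Xi$, $\L_{\bar\Xi}$ chosen so that, conditioned on a small event $A\in\F_0$, $\Xi=\xi$ and $\bar\Xi=\eta$, while off $A$ they equal appropriate shifts forcing $\L_\Xi\le_D\L_{\bar\Xi}$ globally; one needs $\Xi\le_D\bar\Xi$ a.s., which the construction (via $\zeta$ and the function $h^i$ of \eqref{hhh}) guarantees. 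This is essentially the standard device used in necessity proofs of comparison theorems (cf.\ \cite{HW,YMY}), adapted to the $D$-order.

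Next, assuming $D$-order preservation holds, I would derive the pointwise inequality on the drifts. On the event $A$ where $X_0=\xi$, $\bar X_0=\eta$, write down the semimartingale decompositions of $X^i_D(t)=X^i(t)-D^i(X_t)$ and $\bar X^i_D(t)=\bar X^i(t)-D^i(\bar X_t)$. Since $\xi^i(0)-D^i(\xi)=\eta^i(0)-D^i(\eta)$, the processes $X^i_D$ and $\bar X^i_D$ start equal at $t=0$; $D$-order preservation forces $X^i_D(t)\le\bar X^i_D(t)$ for all $t\ge s$, hence $\bar X^i_D(t)-X^i_D(t)\ge0$ starting from $0$. Taking expectations over $A$ and dividing by $t$, using that $X^i_D,\bar X^i_D$ solve SDEs with the diffusion coefficients cancelling in expectation, one obtains $\E[(\bar b_i(t,\bar X_t,\L_{\bar X_t})-b_i(t,X_t,\L_{X_t}))\mathbf 1_A]\ge o(1)$ as $t\downarrow0$; then continuity of $b_i,\bar b_i$ at $(0,\xi,\mu)$ and $(0,\eta,\nu)$ together with $\L_{X_0}\to\mu$-type right-continuity of the segment laws (by construction $\L_{X_0}$ concentrated appropriately; more precisely $\W_2(\L_{X_t},\L_{X_0})\to0$ as $t\downarrow0$ by the moment estimate \eqref{*2} and \eqref{*X}) gives $\bar b_i(0,\eta,\nu)\ge b_i(0,\xi,\mu)$. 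The same scheme at a general time $t>0$ (translating the equation to start at $s=t$) yields $(i')$ for all $t$.

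For $(ii')$, I would exploit the quadratic-variation part: if $\si_{ij}(t,\xi,\mu)\neq\bar\si_{ij}(t,\eta,\nu)$ for some $j$, then near $t=0$ the difference $\bar X^i_D-X^i_D$ behaves like a nondegenerate local martingale started at $0$, which oscillates and in particular takes strictly negative values with positive probability on $A$ — contradicting $X^i_D(t)\le\bar X^i_D(t)$. Quantitatively, compute $\E[((\bar X^i_D(t)-X^i_D(t))^-)^2\mathbf 1_A]$; the drift contributes $O(t^2)$ while the martingale part contributes a term of order $t\cdot\sum_j|\bar\si_{ij}(0,\eta,\nu)-\si_{ij}(0,\xi,\mu)|^2$ times a positive constant (by the reflection/Gaussian lower bound), so if the diffusion coefficients differ this negative part is strictly positive for small $t$, the desired contradiction. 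Hence $\si_{ij}(0,\xi,\mu)=\bar\si_{ij}(0,\eta,\nu)$, and translating in time gives $(ii')$. Finally, the concluding sentence of the theorem is immediate: when all coefficients are globally continuous, $(i')$ applied with $\bar b=b$-free generality and $(ii')$ give precisely conditions $(i)$ (with $\mu\le_D\bar\mu$) and $(ii)$ of Theorem \ref{main}.

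The main obstacle I anticipate is the bookkeeping in the first step — constructing the random initial data that simultaneously (a) equals $(\xi,\eta)$ on a positive-probability $\F_0$-event, (b) satisfies $\Xi\le_D\bar\Xi$ almost surely so that $D$-order preservation applies, and (c) has marginal laws that are right-continuous (in $\W_2$) at $0$ and close to $\mu,\nu$ so the continuity of the coefficients can be invoked. Lemma \ref{partial} is tailored to supply the building blocks $\zeta$ and $\tilde\mu$, but verifying the $\le_D$ relation pathwise for the combined random variable, and controlling the error between $\L_\Xi$ and $\mu$ when extracting the limit, requires care; the rest is a fairly routine short-time SDE estimate of the type already carried out in the proof of Lemma \ref{L2.1}.
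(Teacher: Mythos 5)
Your overall strategy does match the paper's (initial data equal to $(\xi,\eta)$ on a positive-probability event, almost surely $D$-ordered, with segment laws close to $\mu,\nu$; a conditional-expectation, divide-by-$s$ short-time expansion for $(i')$; a ``nondegenerate martingale started at $0$ must go negative'' argument for $(ii')$), but the step you leave as ``bookkeeping'' is precisely the step that carries the proof, and the tool you propose for it is the wrong one. Lemma \ref{partial}(2) only produces a single measure $\tilde\mu$ lying $\le_D$ below both $\mu$ and $\nu$; it does not produce a coupling of $\mu$ and $\nu$, so random initial data built ``off $A$'' from $\zeta$, $h^i$ and shifts cannot have marginals equal to (or $\W_2$-close to) $\mu$ and $\nu$, and then the limiting inequality you extract concerns the wrong measures. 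What is actually needed is Remark \ref{pc} (Kamae--Krengel--O'Brien/Strassen): since $\mu\le_D\nu$ there is $\pi_0\in\mathbf{C}(\mu,\nu)$ with $\pi_0(\{\xi_1\le_D\xi_2\})=1$; one then takes $\pi_\vv=(1-\vv)\pi_0+\vv\,\dd_{(\xi,\eta)}$ and $\P^\vv=\P_0\times\pi_\vv$ on $\OO_0\times\C^2$, so that $X_{t_0}\le_D\bar X_{t_0}$ holds $\P^\vv$-a.s., the event $A=\{X_{t_0}=\xi,\ \bar X_{t_0}=\eta\}$ has $\P^\vv(A)\ge\vv>0$, and the marginals $\mu_\vv,\nu_\vv$ converge to $\mu,\nu$ in $\W_2$ as $\vv\downarrow0$; it is this last convergence, combined with continuity of $b_i,\bar b_i$ (resp.\ $\si_{ij},\bar\si_{ij}$) at $(t_0,\xi,\mu)$ and $(t_0,\eta,\nu)$, that transfers an assumed violation at $(\mu,\nu)$ to a strict violation at $(\mu_\vv,\nu_\vv)$ and yields the contradiction on $A$. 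Without this construction your arguments for $(i')$ and $(ii')$ have nothing to run on.

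Two further points. First, the concluding sentence of the theorem is not ``immediate'': $(ii')$ compares only $D$-ordered pairs, while $(ii)$ asserts that $\si_{ij}=\bar\si_{ij}$ depends only on $t$ and $\eta^i(0)-D^i(\eta)$ for \emph{arbitrary} $\eta$ and $\mu$; this is exactly where Lemma \ref{partial} is used in the paper, producing $\zeta\le\xi\wedge\eta$ with the same $i$-th $D$-coordinate and $\tilde\mu\le_D\mu,\nu$, and applying $(ii')$ twice, $\si_{ij}(t,\xi,\mu)=\si_{ij}(t,\zeta,\tilde\mu)=\si_{ij}(t,\eta,\nu)$. You have displaced that lemma into the initial-data construction, where it does not fit, and skipped it where it is needed. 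Second, your quantitative claim for $(ii')$ --- that $\E[((\bar X^i_D-X^i_D)^-)^2 1_A]$ splits into a drift contribution $O(t^2)$ plus a martingale contribution of order $t\sum_j|\DD\si_{ij}|^2$ --- is heuristic: an It\^o expansion of $x\mapsto (x^-)^2$ produces the term $\E\int 1_{\{Z<0\}}\,\d\<Z\>$, i.e.\ it presupposes the process spends time below $0$, which is what you are trying to establish. The paper makes the oscillation argument rigorous by localizing with stopping times (so that $|\si_{ij}-\bar\si_{ij}|^2\ge c_1$ and $|b_i-\bar b_i|\le l$ up to $\tau_{n,l}$) and applying It\^o's formula to $g_n(x)=\e^{nx}-1$ on $(-\infty,0]$, where $g_n''$ dominates $g_n'$ for large $n$, forcing $1_A\,\E^\vv(t\wedge\tau_{n,l}-t_0\,|\,\F^\vv_{t_0})=0$ and hence $\P^\vv(A)=0$, a contradiction. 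A Dambis--Dubins--Schwarz or Gaussian lower-bound version of your idea could be made to work, but as written it is not a proof. The remainder of your sketch for $(i')$ does coincide with the paper's argument.
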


 We first observe that when $b, \bar b$ are continuous on $[0,\infty)\times \C\times\scr P_2(\C)$, $(i')$ implies $(i)$ with $\mu\leq_D\bar{\mu}$ in place of $\mu\leq \bar{\mu}$.  Next, we prove when $\si, \bar \si$ are continuous on $[0,\infty)\times \C\times\scr P_2(\C)$, $(ii')$ implies $(ii)$.

 Firstly, taking $\xi=\eta$ and $\mu=\nu$, by the continuity of $\si$ and $\bar \si$, $(ii')$ implies $\si=\bar\si$.

Let $\zeta$ and $\tilde{\mu}$ be in Lemma \ref{partial} associated to $\xi,\eta$ and $\mu,\nu$ and applying  $(ii')$ twice we obtain
$$\si_{ij}(t,\xi,\mu)=\si_{ij}(t,\zeta,\tilde{\mu})= \si_{ij}(t, \eta,\nu).$$
Since $\si=\bar\si$, this implies  $(ii)$.

\

 Now,  let $t_0\ge 0$,  $1\leq i\leq n$, $\mu,\nu\in \scr P_2(\C)$ with $\mu\le_D \nu$,   and $\xi,\eta\in\C$ with $\xi\le\eta$ and $\xi^i(0)-D^i(\xi)=\eta^i(0)-D^i(\eta)$.  To prove $(i')$ and $(ii')$ for $t=t_0$, we   construct a family of  complete filtration probability spaces $(\OO,\{\F_t^\vv\}_{t\geq 0},\P^\vv)_{\vv\in [0,1)}$, $m$-dimensional Brownian motion $W(t)$, and initial random variables $ X_{t_0}\le \bar X_{t_0}$ as follows.

Firstly, since $\mu\le_D \nu$, by Remark \ref{pc}, we may take $\pi_0\in {\mathbf C}(\mu,\nu)$ such that
\beq\label{OS1}\pi_0(\{(\xi_1,\xi_2)\in \C^2: \xi_1\le_D \xi_2\})=1.\end{equation} For any $\vv\in [0,1)$, let
\beq\label{OS2}\pi_\vv= (1-\vv)\pi_0+\vv \dd_{(\xi,\eta)},\end{equation}where $\dd_{(\xi,\eta)}$ is the Dirac measure at point $(\xi,\eta)$.
Let $\P_0$ be the standard Wiener measure on $\OO_0:=C([0,\infty)\to \mathbb{R}^m)$, and let $\F_{0,t}$ be the completion of $\si(\oo_0\mapsto \oo_0(s): s\le t)$ with respect to the Wiener measure. Then
the coordinate process $\{W_0(t)\}(\oo_0):= \oo_0(t),\ \oo_0\in\OO_0, t\ge 0$ is an $m$-dimensional Brwonian motion on the filtered probability space $(\OO_0, \{\F_t^0\}_{t\ge 0}, \P_0)$.

Next, for any $\vv\ge 0$, let
$\OO=\OO_0\times \C^2,\ \P^\vv= \P_0 \times \pi_\vv$ and $\F_t^\vv$ be the completion of $\F_{0,t}\times \B(\C^2)$ under the probability measure  $\P^\vv$. Then the process
$$\{W(t)\}(\oo):= \{W_0(t)\}(\oo_0)=\oo_0(t),\ \ t\ge 0, \oo=(\oo_0;\xi_1,\xi_2)\in \OO=\OO_0\times \C^2$$
is an $m$-dimensional Brownian motion on the complete   probability space $(\OO, \{\F_t^\vv\}_{t\ge 0}, \P^\vv)$.

Finally, let
$$X_{t_0}(\oo):= \xi_1,\ \ \bar X_{t_0}(\oo):= \xi_2,\ \ \oo=(\oo_0; \xi_1,\xi_2)\in \OO=\OO_0\times \C^2.$$
They are $\F_{t_0}^\vv$-measurable random variables with
\beq\label{XT}  \scr L_{X_{t_0}}|_{\P^\vv}=\mu_\vv:=\pi_\vv(\cdot\times\C),\ \ \  \scr L_{\bar X_{t_0}}|_{\P^\vv}=\nu_\vv:=\pi_\vv(\C\times\cdot).\end{equation}  By $\xi\le \eta$ with $\xi^i(0)-D^i(\xi)=\eta^i(0)-D^i(\eta)$ and \eqref{OS1}, \eqref{OS2}, we have
\beq\label{XT0} \P^\vv(X_{t_0}\le_D \bar X_{t_0})=\pi_\vv\big(\{(\xi_1,\xi_2)\in \C^2:\ \xi_1\le_D \xi_2\}\big)=1,\ \ \vv\in [0,1).\end{equation}
So, letting $(X_t,\bar X_t)_{t\ge t_0}$ be the segment process of the solution to \eqref{neq} and \eqref{neq1} with initial value $(X_{t_0}, \bar X_{t_0})$, the D-order preservation implies
\beq\label{OP}\P^\vv\big( X_t\le_D \bar X_t,\ t\ge t_0\big)=1,\ \ \vv\in [0,1).\end{equation}
Let $\E^\vv$ be the expectation for $\P^\vv$. With the above preparations, we are able to prove
  $(i')$ and $(ii')$ as follows.

\beg{proof}[Proof of $(i')$]  Let  $b_i, \bar b_i$ be continuous at points $(t_0, \xi,\mu)$ and $(t_0,\eta,\nu)$ respectively.
We intend to prove
 $b_i(t_0, \xi,\mu)\le b_i(t_0,\eta,\nu)$. Otherwise, there exists a constant $c_0>0$ such that
 \beq\label{CB} b_i(t_0, \xi,\mu)\ge c_0+ \bar b_i(t_0,\eta,\nu).\end{equation}
 Let $\mu_\vv,\nu_\vv$ be in \eqref{XT}.
 Obviously, $\{\mu_\vv,\nu_\vv\}_{\vv\in [0,1)}$ are bounded in $\scr P_2(\C)$ and,  as $\vv\to 0$,  $\mu_\vv\to\mu,\ \nu_\vv\to\nu$ weakly.
Consequently,
 $$\lim_{\vv\downarrow 0} \{\W_2(\mu_\vv,\mu)+\W_2(\nu_\vv,\nu)\}=0.$$
 Combining this with \eqref{CB} and the continuity of $b$ and $\bar{b}$, there exists $\vv\in (0,1)$ such that
 \beq\label{CB2} b_i(t_0, \xi,\mu_\vv)\ge \ff 1 2 c_0+  \bar b_i(t_0,\eta,\nu_\varepsilon)> \bar b_i(t_0,\eta,\nu_\varepsilon).\end{equation}

 Now, consider the event
\beq\label{AX} A:=\{X_{t_0}= \xi, \bar X_{t_0}=\eta\}\in \F_{t_0}^\vv.\end{equation}
 Then
 \beq\label{XT3} \P^\vv(A)\ge \vv \dd_{(\xi,\eta)}(\{(\xi,\eta)\})=\vv>0.\end{equation}
By \eqref{neq}, \eqref{neq1} and \eqref{OP}, for any $s\ge 0$, $\P^\vv$-a.s.
\beq\label{XA}\beg{split}
0&\ge X^i(t_0+s)-D^i(X_{t_0+s})-(\bar X^i(t_0+s)-D^i(\bar{X}_{t_0+s}))\\
&=X^i(t_0)-D^i(X_{t_0})-(\bar X^i(t_0)-D^i(\bar{X}_{t_0}))   \\
 &\quad +   \int_{t_0}^{t_0+s}b_i(r,X_r,\L_{X_r})\,\d r-\int_{t_0}^{t_0+s}\bar b_i(r,\bar X_r,\L_{\bar{X}_r})\,\d r  \\
&\quad +  \sum_{j=1}^m \int_{t_0}^{t_0+s}\sigma_{ij}(r,X_r,\L_{X_r})\,\d W_j(r)-\int_{t_0}^{t_0+s}\bar \sigma_{ij}(r,\bar X_r,\L_{\bar{X}_r})\,\d W_j(r).\end{split}\end{equation}
By {\bf(A2)} and the non-explosion of the solution  to \eqref{neq} and \eqref{neq1},  taking conditional expectation in \eqref{XA} with respect to $\F^\vv_{t_0}$,   we
obtain    $\P^\vv$-a.s.
\begin{equation*}\begin{split}
\E^\vv\bigg( \int_{t_0}^{t_0+s}b_i(r,X_r,\L_{X_r})\,\d r\bigg|\F^\vv_{t_0}\bigg) \leq \E^\vv\bigg(  \int_{t_0}^{t_0+s}\bar{b}_i(r,\bar{X}_r,\L_{\bar{X}_r})\,\d r\bigg|\F^\vv_{t_0}\bigg),\ \ s>0.
\end{split}\end{equation*} By \eqref{AX}, this implies
$$\E^\vv\bigg(\ff{1_A}s \int_{t_0}^{t_0+s}b_i(r,X_r,\L_{X_r})\,\d r\bigg|\F^\vv_{t_0}\bigg) \leq \E^\vv\bigg(  \ff{1_A}s\int_{t_0}^{t_0+s}\bar{b}_i(r,\bar{X}_r,\L_{\bar{X}_r})\,\d r\bigg|\F^\vv_{t_0}\bigg),\ \ s>0.$$
Combining this with the fact that  $b_i$ and $\bar b_i$ are continuous at points   $(t_0,\xi,\mu)$ and $(t_0, \eta,\nu)$ respectively,
 and using  {\bf(A2)}, the non-explosion and continuity of the solution  to \eqref{neq} and \eqref{neq1}, taking $s\downarrow 0$ we derive  $\P^\vv$-a.s.
 $$
\E^\vv\big(b_i(t_0,X_{t_0}, \scr L_{X_{t_0}}) \big|\F^\vv_{t_0}\big)\leq\E^\vv\big(\bar b_i(t_0,\bar X_{t_0}, \scr L_{\bar X_{t_0}}) \big|\F^\vv_{t_0}\big).$$
This together with \eqref{AX} and \eqref{XT} leads to  $\P^\vv$-a.s.
\begin{equation*}\begin{split}
b_i(t_0,\xi,\mu_\vv)1_{A}\leq \bar{b}_i(t_0,\eta,\nu_\vv)1_A,
\end{split}\end{equation*} which is impossible  according to \eqref{CB2} and \eqref{XT3}.
Therefore, $b_i(t_0, \xi,\mu)\le \bar b_i(t_0,\eta,\nu)$ has to be true.
\end{proof}

\beg{proof}[Proof of $(ii')$] Let $\si_{ij}$ and $\bar \si_{ij}$ be continuous at points $(t_0, \xi,\mu)$ and $(t_0,\eta,\nu)$ respectively.
If $\si_{ij}(t_0, \xi,\mu)\ne \bar\si_{ij}(t_0,\eta,\nu)$,   by {\bf (A3')}, there exist constants $c_1>0$ and $\vv\in (0,1)$ such that
\beq\label{SXT} |\si_{ij}(t_0, \xi, \mu_\vv)-\bar\si_{ij}(t_0, \eta,\nu_\vv)|^2\ge 2c_1>0.\end{equation}
For any $n,l\geq1$, let
\beg{align*}
&\tau=\inf\big\{t\ge t_0: |\si_{ij}(t, X_t, \scr L_{X_t}|_{\P^\vv})-\bar\si_{ij}(t, \bar X_t, \scr L_{\bar X_t}|_{\P^\vv})|^2\le c_1\big\},\\
&\tau_n=\inf\big\{t\ge t_0: X^i(t)-D^i(X_t)-(\bar X^i(t)-D^i(\bar{X}_t))\le -\frac{1}{n}\big\},\\
&\tau_{l,n}= \tau\land \tau_n\land\inf\big\{ t\ge t_0: |b_i(t, X_t, \scr L_{X_t}|_{\P^\vv})- \bar b_i(t, \bar X_t, \scr L_{\bar X_t}|_{\P^\vv})|\ge l\big\}.\end{align*} Let $g_n(s)=\e^{ns}-1.$ Then $g_n\in C_b^2((-\infty,0])$. By the D-order preservation we have $X^i_t\le_D \bar X^i_t,\ t\ge t_0$. So, Letting $Z^i(s)=(X^i-\bar X^i)(s)-D^i(X_{s})+D^i(\bar{X}_{s})$ and applying It\^o's formula, we obtain $\P^\vv$-a.s.
  \beg{align*}  &0\ge  \E^\vv  \big(g_n(Z^i( t\land {\tau}_{n,l}))|\F^\vv_{t_0}\big)\\
  & =
  g_n(Z^i( t_0))\\
  &+\E^\vv \bigg(\sum_{j=1}^m\int_{t_0}^{t\land{\tau}_{n,l}} g_n'(Z^i(s))\big(\sigma_{ij}(s,X_s,\L_{X_s}|_{\P^\vv})-\bar \sigma_{ij}(s,\bar X_s,\L_{\bar{X}_s}|_{\P^\vv})\big)\d W_j(s)\bigg|\F^\vv_{t_0}\bigg) \\
& + \E^\vv \bigg(\int_{t_{0}}^{t\land{\tau}_{n,l}} \bigg\{g_n'(Z^i(s)) \big(b_i(s,X_s,\L_{X_s}|_{\P^\vv})-\bar b_i(s,\bar X_s,\L_{\bar{X}_s}|_{\P^\vv})\big)\\
&\ +\ff {g_n''(Z^i(s))} 2\sum_{j=1}^m \big|\si_{ij}(s, X_s,\L_{X_s}|_{\P^\vv})-\bar\si_{ij}(s,\bar X_s,\L_{\bar{X}_s}|_{\P^\vv})\big|^2\bigg\}\d s\bigg|\F^\vv_{t_0}\bigg) \\
&\ge g_n(Z^i(t_0)) +\Big(\ff{n^2c_1}{2\e} -  nl\Big)\E^\vv (t\land{\tau}_{n,l}-t_0|\F^\vv_{t_0}),\ \ n, l\ge 1.\end{align*}
By \eqref{AX} and $\xi^i(0)-D^i(\xi)=\eta^i(0)-D^i(\eta),$ this implies
$$ 1_A \Big(\ff{n^2c_1}{2\e} -  nl\Big)\E^\vv (t\land{\tau}_{n,l}-t_0|\F^\vv_{t_0})\le -1_A g_n(Z^i(t_0) ) =-1_A g_n(0)=0$$ for all $n,l\ge 1$ and $t>t_0$.

Take $l\geq 2|b_i(t_0,\xi,\mu_\vv)-\bar{b}^i(t_0,\eta,\nu_\vv)|$ and $n>\frac{2\e l}{c_1}$, we obtain
\beq\label{LK}1_A \E^\vv (t\land{\tau}_{n,l}-t_0|\F_{t_0}^\vv)= 0,\ \ t>t_0.\end{equation}  But by ${\bf (A3)}$, \eqref{SXT}  and the continuity of the solution, on the set $A$ we have
$$\tau_{n,l}>t_0.$$ So, \eqref{LK} implies $\P^\vv(A)=0$, which contradicts \eqref{XT3}. Hence,  $$\si_{ij}(t_0, \xi,\mu)= \bar \si_{ij}(t_0, \eta,\nu).$$
\end{proof}

\beg{thebibliography}{99}

\bibitem{bai1} X. Bai, J. Jiang, \emph{Comparison theorems for neutral stochastic functional differential equations,} J. Differential Equ. 260(2016), 7250--7277.

\bibitem{BY} J. Bao, C. Yuan, \emph{Comparison theorem for stochastic differential delay equations with jumps,} Acta Appl. Math. 116(2011), 119--132.

\bibitem{CW} M.-F. Chen, F.-Y. Wang, \emph{On order-preservation and positive correlations for multidimensional diffusion processes,} Prob. Theory. Relat. Fields 95(1993), 421--428.

    \bibitem{GD} L. Gal'cuk, M. Davis, \emph{A note on a comparison theorem for equations with different diffusions,} Stochastics 6(1982), 147--149.

\bibitem{HRW} X. Huang, M. R\"{o}ckner, F.-Y. Wang, \emph{
Nonlinear Fokker--Planck equations for probability measures on path space and path-distribution dependent SDEs,}  arXiv:1709.00556.

    \bibitem{HW} X. Huang, F.-Y. Wang, \emph{Order-preservation for multidimensional stochastic functional differential equations with jumps,} J.  Evol. Equat. 14(2014),445--460.

\bibitem{IW} N. Ikeda, S. Watanabe, \emph{A comparison theorem for solutions of stochastic differential equations and its applications,} Osaka J. Math. 14(1977), 619--633.

\bibitem{KKO} T. Kamae, U.Krengel, G. L. O'Brien, \emph{Stochastic inequalities on partially ordered spaces,}
Ann. Probab. 5(1977), 899--912.

    \bibitem{M} X. Mao, \emph{A note on comparison theorems for stochastic differential equations with respect to semimartingales,} Stochastics 37(1991), 49--59.
\bibitem{O} G. L. O'Brien, \emph{A new comparison theorem for solution of stochastic differential equations,} Stochastics 3(1980), 245--249.

\bibitem{PY} S. Peng, Z. Yang, \emph{Anticipated backward stochastic differential equations,} Ann. Probab. 37(2009), 877--902.

\bibitem{PZ} S. Peng, X. Zhu, \emph{Necessary and sufficient condition for comparison theorem of 1-dimensional stochastic differential equations,} Stochastic Process. Appl. 116(2006), 370--380.

\bibitem{S} A.V. Skorohod, \emph{Studies in the theory of random process,} Addison-Wesley, 1965.


\bibitem{W} J.-M. Wang, \emph{Stochastic comparison for L\'evy-type processes,} J. Theor. Probab.  26(2013), 997--1019.

\bibitem{W1} F.-Y. Wang, \emph{The stochastic order and critical phenomena for superprocesses,} Infin. Dimens. Anal. Quantum Probab. Relat. Top. 9(2006), 107--128.

\bibitem{W16} F.-Y. Wang, \emph{Distribution-dependent SDEs for Landau type equations,} Stoch. Proc. Appl. 128(2018), 595-621.

\bibitem{Y} T. Yamada, \emph{On comparison theorem for solutions of stochastic differential equations and its applications,} J. Math. Kyoto Univ. 13(1973), 495--512.

    \bibitem{YMY}  Z. Yang, X. Mao, C. Yuan, \emph{Comparison theorem of one-dimensional stochastic hybrid systems,} Systems Control Lett. 57(2008), 56--63.

\bibitem{Z} X. Zhu, \emph{On the comparison theorem for multi-dimensional stochastic differential equations with jumps (in Chinese),} Sci. Sin. Math. 42(2012), 303--311.
\end{thebibliography}

\end{document}